\theoremstyle{break}
	\newtheorem{theorem}{Theorem}[section]
	\newtheorem{lemma}[theorem]{Lemma}
	\newtheorem{remark}[theorem]{Remark}
	\newtheorem{example}[theorem]{Example}
\theoremstyle{nonumberplain}
        \newtheorem{proof}{Proof}
\numberwithin{equation}{section}			
\newcommand{\ZBR}{\mathds{R}}		
\newcommand{\ZBN}{\mathds{N}}		
\newcommand{\KAO}{\mathcal{O}}
\newcommand{\eps}{\varepsilon}
\newcommand{\phj}{\varphi}
\newcommand{\tnorm}[1]{\left|\!\!\;\left|\!\!\;\left| {#1} \right|\!\!\;\right|\!\!\;\right|}
\newcommand{\norm}[1]{\left\| {#1} \right\|}
\newcommand{\snorm}[1]{\left| {#1} \right|}
\title{FEM-analysis on graded meshes for turning point problems exhibiting an interior layer}
\author{Simon Becher\footnote{Institute of Numerical Mathematics, 
Technical University of Dresden, Dresden D-01062, Germany.
\mbox{e-mail:} Simon.Becher@tu-dresden.de}}
\date{} 
\begin{document}

	\maketitle
	
	\begin{abstract}
		We consider singularly perturbed boundary value problems with a simple interior
		turning point whose solutions exhibit an interior layer. These problems are discretised
		using higher order finite elements on layer-adapted graded meshes proposed by Liseikin.
		We prove $\epsilon$-uniform error estimates in the energy norm. Furthermore, for linear
		elements we are able to prove optimal order $\epsilon$-uniform convergence in the $L^2$-norm
		on these graded meshes.
	\end{abstract}
	
	\noindent \textit{AMS subject classification (2010):}
	65L11, 65L20, 65L50, 65L60.
	
	\noindent \textit{Key words:}
	singular perturbation, turning point, interior layer, layer-adapted meshes, higher order finite elements.
	
	\section{Introduction}
	
	We consider singularly perturbed boundary value problems of the type
	\begin{subequations}
	\label{prob:intLayer}
	\begin{equation}
		\begin{aligned}
			-\varepsilon u''(x) + a(x) u'(x) + c(x) u(x) &= f(x) \qquad \text{in } (-1,1), \\
			u(-1) = \nu_{-1}, \quad u(1) &= \nu_1,
		\end{aligned}
	\end{equation}
	where $0 < \varepsilon \ll 1$ is a small parameter and $a, c, f$ are sufficiently smooth with
	\begin{equation}
		a(x) = - (x-x_0) b(x), \qquad b(x) > 0, \qquad c(x) \geq 0, \qquad c(x_0) > 0
	\end{equation}
	\end{subequations}
	for a point $x_0 \in (-1,1)$. Thus, the solution of~\eqref{prob:intLayer} exhibits an
	interior layer of ``cusp''-type at the simple interior turning point $x_0$.
	
	In the literature (see i.e.~\cite{BHK84}, \cite[p.~95]{Lis01}, \cite[Lemma~2.3]{SS94}) the
	bounds for such interior layers are well known. We have
	\begin{gather}
		\label{ieq:innerLayerBounds}
		\left| u^{(i)}(x,\eps) \right| \leq C \left( 1 + \left(\eps^{1/2} + |x-x_0| \right)^{\lambda-i}\right)
	\end{gather}
	where the parameter $\lambda$ satisfies $0 < \lambda < \bar{\lambda} := c(x_0)/|a'(x_0)|$.
	The estimate also holds for $\lambda = \bar{\lambda}$, if $\bar{\lambda}$ is not an integer.
	Otherwise there is an additional logarithmic factor, see references cited above.
	For convenience	we assume $x_0 = 0$ in the following.
	
	In the last decades a multitude of numerical methods has been developed to solve singularly
	perturbed problems with turning points and interior layers. For a general review we refer
	to~\cite{SRP13}. Many authors have considered finite difference methods. A selection of
	possible schemes for problems of the form~\eqref{prob:intLayer} may be found in~\cite{Far88}
	and the references therein. Also some layer-adapted meshes have been proposed to handle
	interior layers of ``cusp''-type. As an example Liseikin~\cite{Lis01} proved the $\eps$-uniform first
	order convergence of an upwind finite difference method on special graded meshes. Moreover,
	Sun and	Stynes~\cite{SS94} studied finite elements on a piecewise uniform mesh.
	
	We shall also analyse the finite element method, but on the graded meshes proposed by
	Liseikin which are described by the mesh generating function
	\begin{gather*}
		\phj(\xi,\varepsilon) =
		\begin{cases}
			\left(\varepsilon^{\alpha/2}
				+ \xi \left[(1+\varepsilon^{1/2})^\alpha-\varepsilon^{\alpha/2}\right]\right)^{1/\alpha}
				- \varepsilon^{1/2}
				& \text{ for }  0 \leq \xi \leq 1, \\
			\varepsilon^{1/2} - \left(\varepsilon^{\alpha/2}
				- \xi \left[(1+\varepsilon^{1/2})^\alpha - \varepsilon^{\alpha/2} \right] \right)^{1/\alpha}
				& \text{ for } 0 \geq \xi \geq -1,
		\end{cases}
	\end{gather*}
	where $0< \alpha \leq \lambda$. In order to handle these meshes, we adapt some
	basic ideas from~\cite[pp.~243--244]{Lis01}. While the strategy of Sun and Stynes
	in~\cite[Section~5]{SS94} is restricted to linear finite elements, our approach is
	more general. Thus, we are able to treat finite elements of higher order as well.
	
	Under certain assumptions, we prove $\varepsilon$-uniform
	convergence in the energy norm of the form
	\begin{gather*}
		\tnorm{u- u_N}_{\varepsilon} \leq C N^{-k}
	\end{gather*}
	for finite elements of order $k$, where $C$ may depend on $\alpha$ and $k$, see
	Theorem~\ref{th:energyErrorLiseikin}. On the basis of a supercloseness result we also
	give an optimal error estimate in the $L^2$-norm of the form
	\begin{gather*}
		\norm{u-u_N} \leq C N^{-2}
	\end{gather*}
	for linear finite elements, see Theorem~\ref{th:L2ErrorLiseikin}.
	Numerical experiments confirm our theoretical results.
	
	Notation: In this paper $C$ denotes a generic constant independent of $\eps$
	and the number of mesh points. Furthermore, for an interval $I$ the usual Sobolev
	spaces $H^1(I)$, $H_0^1(I)$, and $L^2(I)$ are used. The spaces of continuous and
	$k$ times continuously differentiable functions on $I$ are written as $C(I)$
	and $C^k(I)$, respectively. Let $\left(\cdot,\cdot\right)_I$ denote the usual
	$L^2(I)$ inner product and $\norm{\cdot}_I$ the $L^2(I)$-norm. We will also
	use the supremum norm on $I$ given by $\norm{\cdot}_{\infty,I}$ and the
	semi-norm in $H^1(I)$ given by $\snorm{\cdot}_{1,I}$. If $I = (-1,1)$, the
	index $I$ in inner products, norms, and semi-norms will be omitted. Additionally,
	for all $v \in H^1((-1,1))$ we define a weighted energy norm by
	\begin{gather*}
		\tnorm{v}_\eps := \left( \eps \snorm{v}_1^2 + \norm{v}^2\right)^{1/2}.
	\end{gather*}
	Further notation will be introduced later at the beginning of the sections
	where it is needed.

\section{The graded meshes proposed by Liseikin}
\label{sec:LisMesh}	
	
	The basic idea of Liseikin is to find a transformation $\phj(\xi,\eps)$ that eliminates
	the singularities of the solution when it is studied with respect to $\xi$. In our case
	the approach can be condensed to the task to find $\phj: [0,1] \to [0,1]$ such that
	\begin{gather}
		\label{ieq:meshFunc}
		\phj' \left(\phj + \eps^{1/2}\right)^{\lambda - 1} \leq C,
		\qquad \qquad \phj(0)=0, \qquad \phj(1)=1.
	\end{gather}
	The outcome of this approach is the mesh generating function
	\begin{gather}
		\label{eq:liseikinMeshGenerator}
		\phj(\xi,\eps) =
		\begin{cases}
			\left(\eps^{\alpha/2} + \xi \left[(1+\eps^{1/2})^\alpha-\eps^{\alpha/2}\right]\right)^{1/\alpha} - \eps^{1/2}
				& \text{ for }  0 \leq \xi \leq 1, \\
			\eps^{1/2} - \left(\eps^{\alpha/2} - \xi \left[(1+\eps^{1/2})^\alpha - \eps^{\alpha/2} \right] \right)^{1/\alpha}
				& \text{ for } 0 \geq \xi \geq -1,
		\end{cases}
	\end{gather}
	where $0< \alpha \leq \lambda$. By construction we have $\phj(0,\eps) = 0$
	and $\phj(\pm 1,\eps)=\pm 1$. Note that Liseikin derived the same transformation indirectly.
	Based on the principle of equidistribution, he used basic majorants of the solution
	derivatives to find basic layer-damping transformations. This procedure allows to handle
	also various other types of singularities, see e.g.~\cite[Chapter~6]{Lis01}.
	
	Now, the mesh points are generated by $x_i = \phj(\frac{i}{N},\eps)$, $i=-N,\ldots,N$.
	We will denote the lengths of the mesh intervals by $h_i := x_i - x_{i-1}$,
	$i = -N +1 ,\ldots, N$ and $h := N^{-1}$. Additionally, set $\hbar_i := (h_i+h_{i+1})/2$
	for $i = -N+1,\ldots,N-1$.

	Motivated by Lemma~\ref{le:1pcmc} and~\ref{le:1pcmcII}, we define and estimate a special
	constant $\kappa$ dependent on $\alpha \in (0,1]$ and $\eps \in (0,1]$ by
	\begin{gather}
		\label{kappa}
		 0< \ln(2) \leq \kappa
		 	:= \kappa(\alpha,\eps)
		 	:= \frac{(1+\eps^{1/2})^\alpha-\eps^{\alpha/2}}{\alpha}
		 	\leq \min\big\{\alpha^{-1},1 + \big|\log_2 (\eps^{1/2})\big|\big\}.
	\end{gather}
	It remains to check whether or not $\phj$ defined in~\eqref{eq:liseikinMeshGenerator}
	satisfies~\eqref{ieq:meshFunc}. An easy calculation shows
	\begin{align*}
		\frac{\partial \phj}{\partial \xi} \left( \phj + \eps^{1/2}\right)^{\lambda - 1}
			& = \frac{1}{\alpha} \left[(1+\eps^{1/2})^\alpha-\eps^{\alpha/2}\right]
				\left(\eps^{\alpha/2} + \xi \left[(1+\eps^{1/2})^\alpha-\eps^{\alpha/2}\right]\right)^{(1-\alpha)/\alpha}\\
			& \qquad \qquad
				\left(\eps^{\alpha/2} + \xi \left[(1+\eps^{1/2})^\alpha-\eps^{\alpha/2}\right]\right)^{(\lambda-1)/\alpha} \\
			& = \kappa \left(\eps^{\alpha/2} + \xi \left[(1+\eps^{1/2})^\alpha-\eps^{\alpha/2}\right]\right)^{(\lambda-\alpha)/\alpha}
				\leq C \kappa
	\end{align*}
	which can be bounded independent of $\eps$ due to~\eqref{kappa}.
	
	Since the arguments are very similar thanks to the symmetry of the mesh, we will consider the case
	$\xi \geq 0$ only. The next lemmas comprise some basic results concerning the mesh points and
	mesh intervals. Their proofs are deferred to Appendix~\ref{app:proofLiseikin}. The argumentation
	substantially uses the property~\eqref{ieq:meshFunc}. Here, the derivative of $\phj$ comes into play
	since the mean value theorem guarantees the estimate $h_i \leq h \frac{\partial \phj}{\partial \xi}(\xi_i,\eps)$
	for a $\xi_i \in (x_{i-1},x_i)$.
	\begin{lemma}
		\label{le:liseikinh^kBounds}
		Let $\hat{\alpha} > 0$ and $0< \alpha \leq \min\{\hat{\alpha}/k,1\}$ with $k \in \ZBN,\, k\geq 1$ then
		\begin{gather*}
			h_i^k \left( x_{i-1}+\eps^{1/2} \right)^{\hat{\alpha}-k} \leq 
			\begin{cases}
				C h^k		& \text{ for } 2 \leq i \leq N, \\
				h_1^k \eps^{(\hat{\alpha}-k)/2} \leq C h^k	& \text{ for } i = 1, \quad \eps \geq h^{2/\alpha}.
			\end{cases}
		\end{gather*}
		If $0 < \alpha \leq 1/k$ with $k \in \ZBN,\, k\geq 1$ and $\eps \leq h^{2/\alpha}$ then we have
		\begin{gather*}
			x_1 \leq C h^k.
		\end{gather*}
		In general, we have for $0 < \alpha \leq 1$
		\begin{gather*}
			h_i \leq C h
				\qquad \text{ for } 1 \leq i \leq N.
		\end{gather*}
	\end{lemma}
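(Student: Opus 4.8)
The plan is to exploit the mesh-defining property~\eqref{ieq:meshFunc} together with the mean value theorem bound $h_i \leq h\,\frac{\partial\phj}{\partial\xi}(\xi_i,\eps)$ for a suitable $\xi_i\in(x_{i-1},x_i)$. From~\eqref{ieq:meshFunc} we know $\phj'(\xi)\big(\phj(\xi)+\eps^{1/2}\big)^{\lambda-1}\leq C$, hence $\phj'(\xi_i)\leq C\big(\phj(\xi_i)+\eps^{1/2}\big)^{1-\lambda}\leq C\big(x_{i-1}+\eps^{1/2}\big)^{1-\lambda}$ since $\phj$ is increasing and $\phj(\xi_i)\geq x_{i-1}$. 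This already yields $h_i\leq C\,h\,(x_{i-1}+\eps^{1/2})^{1-\lambda}$, which is the workhorse inequality. Raising to the $k$-th power gives $h_i^k\leq C\,h^k\,(x_{i-1}+\eps^{1/2})^{k(1-\lambda)}$, so that
\begin{gather*}
	h_i^k\big(x_{i-1}+\eps^{1/2}\big)^{\hat\alpha-k}
	\leq C\,h^k\,\big(x_{i-1}+\eps^{1/2}\big)^{\hat\alpha-k+k(1-\lambda)}
	= C\,h^k\,\big(x_{i-1}+\eps^{1/2}\big)^{\hat\alpha-k\lambda}.
\end{gather*}

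For the first estimate with $2\leq i\leq N$: here $x_{i-1}\geq x_1\geq 0$, so $x_{i-1}+\eps^{1/2}\geq\eps^{1/2}$; moreover $x_{i-1}+\eps^{1/2}\leq 1+\eps^{1/2}\leq 2$. Since $\alpha\leq\hat\alpha/k$ forces $\hat\alpha-k\lambda\geq\hat\alpha-k\alpha\geq 0$ (using $\lambda\geq\alpha$), the exponent is nonnegative and the factor $(x_{i-1}+\eps^{1/2})^{\hat\alpha-k\lambda}$ is bounded by the constant $2^{\hat\alpha}$. Actually one must be a little careful: we only get the clean conclusion if $\lambda$ can be taken equal to $\alpha$ in the bound — but~\eqref{ieq:meshFunc} holds for the specific $\lambda$ of the problem with $\alpha\leq\lambda$, and the displayed calculation in the excerpt shows $\phj'(\phj+\eps^{1/2})^{\lambda-1}=\kappa(\eps^{\alpha/2}+\xi[\cdots])^{(\lambda-\alpha)/\alpha}$, so in fact $\phj'(\phj+\eps^{1/2})^{\alpha-1}=\kappa$ exactly. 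Thus the sharp bound to use is $h_i\leq h\kappa\,(x_{i-1}+\eps^{1/2})^{1-\alpha}$, and then $h_i^k(x_{i-1}+\eps^{1/2})^{\hat\alpha-k}\leq h^k\kappa^k(x_{i-1}+\eps^{1/2})^{\hat\alpha-k\alpha}$; the exponent $\hat\alpha-k\alpha\geq 0$ and the base is $\leq 2$, so this is $\leq C\kappa^k h^k$. To absorb $\kappa$ into $C$ we invoke the bound $\kappa\leq\min\{\alpha^{-1},\,1+|\log_2(\eps^{1/2})|\}$ from~\eqref{kappa}; since $C$ is permitted to depend on $\alpha$ (and on $k$), $\kappa\leq\alpha^{-1}$ suffices and we obtain $h_i^k(x_{i-1}+\eps^{1/2})^{\hat\alpha-k}\leq Ch^k$.

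For $i=1$ with $\eps\geq h^{2/\alpha}$: here $x_0=0$, so we need $h_1^k\,\eps^{(\hat\alpha-k)/2}\leq Ch^k$. From the mesh generator, $h_1=x_1=(\eps^{\alpha/2}+h[(1+\eps^{1/2})^\alpha-\eps^{\alpha/2}])^{1/\alpha}-\eps^{1/2}=(\eps^{\alpha/2}+h\alpha\kappa)^{1/\alpha}-\eps^{1/2}$. Under $\eps\geq h^{2/\alpha}$ we have $h\leq\eps^{\alpha/2}$, so $h\alpha\kappa\leq\alpha\kappa\eps^{\alpha/2}\leq\eps^{\alpha/2}$ (as $\alpha\kappa=(1+\eps^{1/2})^\alpha-\eps^{\alpha/2}\leq 1$), hence $\eps^{\alpha/2}+h\alpha\kappa\leq 2\eps^{\alpha/2}$. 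The elementary inequality $(a+b)^{1/\alpha}-a^{1/\alpha}\leq C(\alpha)\,b\,a^{1/\alpha-1}$ for $0<b\leq a$ (mean value theorem, with $1/\alpha\geq 1$, over $[a,a+b]\subset[a,2a]$) gives $h_1\leq C(\alpha)\,h\kappa\,(\eps^{\alpha/2})^{1/\alpha-1}=C\,h\kappa\,\eps^{(1-\alpha)/2}$. Then $h_1^k\eps^{(\hat\alpha-k)/2}\leq C\kappa^k h^k\eps^{k(1-\alpha)/2+(\hat\alpha-k)/2}=C\kappa^k h^k\eps^{(\hat\alpha-k\alpha)/2}\leq C\kappa^k h^k$ since $\hat\alpha-k\alpha\geq 0$ and $\eps\leq 1$; absorbing $\kappa\leq\alpha^{-1}$ finishes it.

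For the second assertion ($0<\alpha\leq 1/k$, $\eps\leq h^{2/\alpha}$, hence $\eps^{1/2}\leq h^{1/\alpha}\leq h$): compute $x_1=(\eps^{\alpha/2}+h\alpha\kappa)^{1/\alpha}-\eps^{1/2}\leq(\eps^{\alpha/2}+h\alpha\kappa)^{1/\alpha}$. Now $\eps^{\alpha/2}\leq h$ and $h\alpha\kappa=h[(1+\eps^{1/2})^\alpha-\eps^{\alpha/2}]\leq h(1+\eps^{1/2})^\alpha\leq h\,2^\alpha\leq 2h$, so $\eps^{\alpha/2}+h\alpha\kappa\leq 3h$, whence $x_1\leq(3h)^{1/\alpha}\leq 3^{1/\alpha}h^{1/\alpha}$. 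Since $\alpha\leq 1/k$ means $1/\alpha\geq k$ and $h\leq 1$, we get $h^{1/\alpha}\leq h^k$, so $x_1\leq Ch^k$ with $C=3^{1/\alpha}$ depending only on $\alpha$. The third, general assertion $h_i\leq Ch$ for $1\leq i\leq N$ is the case $k=1$, $\hat\alpha=1$ of the first part in the regime $\eps\geq h^{2/\alpha}$ (giving $h_i\leq C h$ for all $i$ since $x_0+\eps^{1/2}=\eps^{1/2}$ handled there too), combined with the regime $\eps\leq h^{2/\alpha}$ where $h_1=x_1\leq Ch^k\leq Ch$ from the second assertion (with $k=1$) and $h_i\leq Ch$ for $i\geq 2$ from the estimate $h_i\leq h\kappa(x_{i-1}+\eps^{1/2})^{1-\alpha}\leq h\kappa\cdot 2^{1-\alpha}\leq Ch$ using $\kappa\leq\alpha^{-1}$.

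The main obstacle is the case $i=1$: unlike the interior intervals, where the mean value theorem cleanly converts $\phj'$ bounds into $h_i$ bounds against $x_{i-1}$, for $i=1$ the natural anchor $x_0=0$ makes the factor $(x_0+\eps^{1/2})^{\hat\alpha-k}=\eps^{(\hat\alpha-k)/2}$ potentially large when $\hat\alpha<k$, and one must instead work directly with the explicit formula for $x_1$ and carefully split according to whether $\eps\gtrless h^{2/\alpha}$, using the elementary power inequality $(a+b)^{1/\alpha}-a^{1/\alpha}\lesssim b\,a^{(1-\alpha)/\alpha}$ in the former regime and crude bounding $(\eps^{\alpha/2}+h\alpha\kappa)^{1/\alpha}\lesssim h^{1/\alpha}$ in the latter. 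Once that elementary estimate is in place, everything else is bookkeeping with the exponent inequality $\hat\alpha-k\alpha\geq 0$ and the bound $\kappa\leq\alpha^{-1}$ from~\eqref{kappa}, both of which reduce every right-hand side to $Ch^k$ with $C=C(\alpha,k)$.
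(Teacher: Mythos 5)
Your treatment of $i=1$ (both regimes) and of $x_1\leq Ch^k$ is correct and essentially the paper's own argument: the explicit formula $x_1=(\eps^{\alpha/2}+h\alpha\kappa)^{1/\alpha}-\eps^{1/2}$, the split at $\eps\gtrless h^{2/\alpha}$, the bound $\alpha\kappa\leq 1$, and $\kappa\leq\alpha^{-1}$ from~\eqref{kappa}. The problem is the ``workhorse inequality'' $h_i\leq C\,h\,(x_{i-1}+\eps^{1/2})^{1-\alpha}$ on which your main case $2\leq i\leq N$ rests. Your justification is backwards: the mean value theorem gives $h_i\leq h\,\phj'(\xi_i)=h\kappa\bigl(\phj(\xi_i)+\eps^{1/2}\bigr)^{1-\alpha}$ with $\xi_i\in((i-1)h,ih)$, and since $1-\alpha\geq 0$ and $\phj$ is increasing, the fact $\phj(\xi_i)\geq x_{i-1}$ yields a \emph{lower} bound for $\bigl(\phj(\xi_i)+\eps^{1/2}\bigr)^{1-\alpha}$ in terms of $x_{i-1}$, not an upper one; what monotonicity actually gives is the bound anchored at the \emph{right} endpoint, $h_i\leq h\kappa\,(x_i+\eps^{1/2})^{1-\alpha}$. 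To move the anchor from $x_i$ to $x_{i-1}$ you need the mesh-specific comparability $(x_i+\eps^{1/2})/(x_{i-1}+\eps^{1/2})\leq C(\alpha)$ for $i\geq 2$, which in the paper is obtained from~\eqref{eq:xiPsqrteps} via $\bigl(\eps^{\alpha/2}+ih\alpha\kappa\bigr)/\bigl(\eps^{\alpha/2}+(i-1)h\alpha\kappa\bigr)\leq i/(i-1)\leq 2$, so that the ratio is at most $2^{1/\alpha}$. This step is not cosmetic: the inequality you assert is genuinely false for $i=1$ when $\eps\leq h^{2/\alpha}$ (there $h_1\approx h^{1/\alpha}$ while $h\,\eps^{(1-\alpha)/2}\to 0$ as $\eps\to0$), which shows that some structural input about this particular mesh is required. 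Without the ratio bound, your argument only covers $\hat{\alpha}\geq k$ (where $(x_{i-1}+\eps^{1/2})^{\hat{\alpha}-k}$ is harmless); the delicate case $\hat{\alpha}<k$, which is the one actually needed in the error analysis, is left unproven.

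Once that ratio estimate is inserted, your proof coincides with the paper's: bound $h_i$ by $h\kappa(x_i+\eps^{1/2})^{1-\alpha}$, trade $x_i$ for $x_{i-1}$ at the cost of $(i/(i-1))^{(1-\alpha)/\alpha}\leq 2^{1/\alpha}$, use $\hat{\alpha}-k\alpha\geq 0$ and $\kappa\leq\alpha^{-1}$, and raise to the $k$-th power. The same remark applies to your derivation of the general bound $h_i\leq Ch$ for $i\geq 2$: anchoring at $x_i$ makes it immediate ($h_i\leq h\kappa\,2^{1-\alpha}\leq Ch$), whereas your $x_{i-1}$-anchored version again silently presupposes the unproven transfer.
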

	
	\begin{lemma}
		\label{le:liseikinhBounds}
		For $0 < \alpha \leq \frac{1}{2}$ the following inequality holds
		\begin{gather*}
			h_i - h_{i-1}
				\leq C h^2 \left( x_i + \eps^{1/2} \right)^{1-2\alpha}
				\qquad \text{ for } 2 \leq i \leq N.
		\end{gather*}
		Let $\hat{\alpha} > 0$ and $0 < \alpha \leq \min\{\hat{\alpha}/2,1/2\}$	then
		\begin{gather*}
			\left(h_i - h_{i-1}\right) \left( x_{i-1} + \eps^{1/2} \right)^{\hat{\alpha}-1}
				\leq C h^2
				\qquad \text{ for } 2 \leq i \leq N.
		\end{gather*}
	\end{lemma}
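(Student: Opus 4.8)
The plan is to prove both displays in Lemma~\ref{le:liseikinhBounds} by reducing them, via the mean value theorem, to pointwise bounds on derivatives of $\phj$ with respect to $\xi$, exactly as announced before the lemma. For the first inequality, write $h_i - h_{i-1} = \phj(\tfrac{i}{N}) - 2\phj(\tfrac{i-1}{N}) + \phj(\tfrac{i-2}{N})$ and apply a second-order Taylor/divided-difference argument: there is a $\xi^* \in (\tfrac{i-2}{N},\tfrac{i}{N})$ with $h_i - h_{i-1} = h^2\, \phj''(\xi^*,\eps)$. So everything hinges on an $\eps$-uniform bound of the form $\snorm{\phj''(\xi,\eps)} \leq C (\phj(\xi,\eps)+\eps^{1/2})^{1-2\alpha}$ for $\xi \geq 0$. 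Differentiating the closed form~\eqref{eq:liseikinMeshGenerator}, or more cleanly differentiating the identity $\phj' = \kappa (\phj+\eps^{1/2})^{1-\alpha}$ obtained from~\eqref{ieq:meshFunc}'s computation (recall $\phj'(\phj+\eps^{1/2})^{\lambda-1} = \kappa(\phj+\eps^{1/2})^{\lambda-\alpha}$, hence with $\lambda$ replaced consistently $\phj' = \kappa(\phj+\eps^{1/2})^{1-\alpha}$ when we track the $\alpha$-power), gives $\phj'' = \kappa(1-\alpha)(\phj+\eps^{1/2})^{-\alpha}\phj' = \kappa^2(1-\alpha)(\phj+\eps^{1/2})^{1-2\alpha}$. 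Then~\eqref{kappa} bounds $\kappa^2(1-\alpha)$ by a constant depending only on $\alpha$ (using $\kappa \leq \alpha^{-1}$, so $\kappa^2(1-\alpha) \leq (1-\alpha)/\alpha^2 \leq C$), which yields the claim.

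For the passage from $(x_i + \eps^{1/2})$ at the point $\xi^*$ to $(x_i + \eps^{1/2})$ at the right node, I would note that $\phj$ is increasing, so $\phj(\xi^*) + \eps^{1/2} \leq \phj(\tfrac{i}{N}) + \eps^{1/2} = x_i + \eps^{1/2}$, and since $1 - 2\alpha \geq 0$ under the hypothesis $\alpha \leq 1/2$, the monotonicity goes the right way: $(\phj(\xi^*)+\eps^{1/2})^{1-2\alpha} \leq (x_i+\eps^{1/2})^{1-2\alpha}$. That finishes the first display.

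For the second display, the idea is to multiply the first inequality by $(x_{i-1}+\eps^{1/2})^{\hat\alpha - 1}$ and combine the two powers of $(x+\eps^{1/2})$. One gets $(h_i - h_{i-1})(x_{i-1}+\eps^{1/2})^{\hat\alpha-1} \leq C h^2 (x_i + \eps^{1/2})^{1-2\alpha}(x_{i-1}+\eps^{1/2})^{\hat\alpha-1}$. Here I would invoke the already established fact from Lemma~\ref{le:liseikinh^kBounds} (with $k=1$) or a direct argument that neighbouring factors $x_i + \eps^{1/2}$ and $x_{i-1}+\eps^{1/2}$ are comparable up to a constant for $i \geq 2$ — indeed $x_i + \eps^{1/2} \leq x_{i-1} + \eps^{1/2} + h_i$ and $h_i \leq C(x_{i-1}+\eps^{1/2})$ follows from $h_i \leq h\,\phj'(\xi_i) = h\kappa(\phj(\xi_i)+\eps^{1/2})^{1-\alpha} \leq C(x_{i-1}+\eps^{1/2})^{1-\alpha}(x_{i-1}+\eps^{1/2})^{\alpha} = C(x_{i-1}+\eps^{1/2})$ using $h\kappa \leq C$ and $(\phj(\xi_i)+\eps^{1/2}) \leq C(x_{i-1}+\eps^{1/2})$. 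Hence the two powers merge into $(x_{i-1}+\eps^{1/2})^{\hat\alpha - 2\alpha}$, and the hypothesis $\alpha \leq \hat\alpha/2$ makes this exponent nonnegative, while $x_{i-1} + \eps^{1/2} \leq x_N + \eps^{1/2} \leq 1 + 1 = 2$ bounds it by a constant. Dividing through, the $C h^2$ factor survives and the claim follows.

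The main obstacle I anticipate is bookkeeping the $\eps$-dependence of $\kappa$ cleanly so that all constants genuinely depend only on $\alpha$ (and $\hat\alpha$) and not on $\eps$ or $N$: in particular one must be careful that the bound $\kappa \leq \alpha^{-1}$ from~\eqref{kappa}, rather than the alternative bound involving $\snorm{\log_2(\eps^{1/2})}$, is the one used whenever $\kappa$ appears squared or multiplied by $h$, since the logarithmic bound would not give $\eps$-uniformity. A secondary subtlety is making the divided-difference argument for $h_i - h_{i-1}$ rigorous — one should either Taylor-expand $\phj$ around $\tfrac{i-1}{N}$ with integral remainder and bound $\phj''$ on the whole interval $[\tfrac{i-2}{N},\tfrac{i}{N}]$ (which requires the $\phj''$ bound to be monotone-dominated by its value at the right endpoint, valid since $1-2\alpha \geq 0$), or use the mean-value form directly; both are routine once the $\phj''$ estimate is in hand.
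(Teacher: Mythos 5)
Your overall route is the paper's route: express $h_i-h_{i-1}$ as a second difference of $\phj$, reduce it via the mean value theorem to $h^2\phj''$, use the closed form $\phj''=\kappa^2(1-\alpha)(\phj+\eps^{1/2})^{1-2\alpha}$ together with $\kappa\leq\alpha^{-1}$ from~\eqref{kappa}, and exploit $1-2\alpha\geq 0$ to pass to the right node $x_i$; for the second display you merge the exponents into $(x_{i-1}+\eps^{1/2})^{\hat\alpha-2\alpha}\leq C$ after comparing $x_i+\eps^{1/2}$ with $x_{i-1}+\eps^{1/2}$. This is exactly how the paper argues (it bounds $\phj''(\xi_{i-1})\leq\phj''(ih)$ and then uses the ratio $\bigl(\tfrac{i}{i-1}\bigr)^{(1-2\alpha)/\alpha}\leq C$), so in structure your proposal is correct and not a different proof.

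The one step you should repair is your ``direct argument'' for the neighbour comparability $x_i+\eps^{1/2}\leq C\,(x_{i-1}+\eps^{1/2})$, $i\geq 2$. As written it is circular and the stated ingredients do not suffice: you justify $h_i\leq C(x_{i-1}+\eps^{1/2})$ by assuming $\phj(\xi_i)+\eps^{1/2}\leq C(x_{i-1}+\eps^{1/2})$, which (since $\xi_i$ may be close to $ih$) is essentially the comparability you are trying to prove; moreover $h\kappa\leq C$ alone cannot produce the missing factor $(x_{i-1}+\eps^{1/2})^{\alpha}$ — note that $(x_{i-1}+\eps^{1/2})^{1-\alpha}\leq C(x_{i-1}+\eps^{1/2})$ is false near the layer, where $x_{i-1}+\eps^{1/2}\approx\eps^{1/2}$. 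Also, citing Lemma~\ref{le:liseikinh^kBounds} with $k=1$ does not directly give the comparability, since its statement only bounds $h_i(x_{i-1}+\eps^{1/2})^{\hat\alpha-1}$. The clean fix is the identity~\eqref{eq:xiPsqrteps}: for $i\geq 2$, $(x_i+\eps^{1/2})^{\alpha}=\eps^{\alpha/2}+ih\,\alpha\kappa\leq\tfrac{i}{i-1}\bigl(\eps^{\alpha/2}+(i-1)h\,\alpha\kappa\bigr)\leq 2\,(x_{i-1}+\eps^{1/2})^{\alpha}$, hence $x_i+\eps^{1/2}\leq 2^{1/\alpha}(x_{i-1}+\eps^{1/2})$; this is precisely the $\tfrac{i}{i-1}$ computation the paper reuses from the proof of Lemma~\ref{le:liseikinh^kBounds}. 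With that substitution your argument is complete, and your bookkeeping of $\kappa$ (always through $\kappa\leq\alpha^{-1}$, never the logarithmic bound) is consistent with the paper's convention that $C$ may depend on $\alpha$ and $\hat\alpha$ but not on $\eps$ or $N$.
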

	
	\begin{remark}
		Note that an estimate similar to the first one of Lemma~\ref{le:liseikinhBounds}
		can also be found in~\cite{SunX92}.
	\end{remark}
	
	\begin{remark}
		In the FEM-analysis a generalised version of~\eqref{ieq:meshFunc}, i.e.
		\begin{gather*}
			\left(\phj'\right)^k \left(\phj + \eps^{1/2}\right)^{\lambda - k} \leq C,
			\qquad \qquad \phj(0)=0, \qquad \phj(1)=1,
		\end{gather*}
		would be convenient. In fact, this is ensured for $0 < \alpha \leq \lambda/k$
		which is already used in the proof of Lemma~\ref{le:liseikinh^kBounds}.
	\end{remark}
	
\section{FEM-analysis on graded meshes}
\label{sec:FEM-analysis}
	
	This section follows the paper of Sun and Stynes~\cite{SS94}, but while they studied
	linear finite elements on a layer-adapted piecewise uniform mesh, we shall use the
	graded mesh proposed by Liseikin instead. Besides our more general approach enables
	to analyse finite elements of higher order as well. We will only consider homogeneous
	Dirichlet boundary condition $\nu_{-1} = \nu_1 = 0$. This is no restriction at all
	since it can be easily ensured by modifying the right hand side $f$. Furthermore, due
	to~\cite[Lemma 2.1]{SS94} we may assume without loss of generality that
	\begin{gather}
		\label{gamma}
		\left( c - \tfrac{1}{2}a'\right)(x) \geq \gamma > 0 \qquad \text{ for all } x \in [-1,1], \quad
			\eps \text{ sufficiently small.}
	\end{gather}
	
	For $v, w \in H_0^1((-1,1))$ we set
	\begin{gather*}
		B_\eps\!\left(v,w\right) := \left(\eps v',w'\right) + \left(a v', w\right) + \left(c v,w\right)\!.
	\end{gather*}
	Note that the bilinear form $B_\eps\!\left(\cdot,\cdot\right)$ is uniformly coercive over
	$H_0^1((-1,1)) \times H_0^1((-1,1))$ in terms of the energy norm $\tnorm{\cdot}_\eps$
	thanks to~\eqref{gamma}.
	
	The weak formulation of~\eqref{prob:intLayer} with $\nu_{-1} = \nu_1 = 0$ reads
	as follows: \medskip
	
	Find $u \in H_0^1((-1,1))$ such that
	\begin{gather*}
		B_\eps\!\left(u,v\right) = \left(f,v \right), \qquad \text{ for all } v \in H_0^1((-1,1)).
	\end{gather*}
	
	Let $k \geq 1$ and let $P_k((x_a,x_b))$ denote the space of polynomial functions of maximal
	order $k$ over $(x_a,x_b)$. We define the trial and test space $V^N$ by
	\begin{gather*}
		V^N := \left\{ v \in C([-1,1]) : v|_{(x_{i-1},x_i)} \in P_k((x_{i-1},x_i))\, \forall i, \, v(-1) = v(1) = 0 \right\}.
	\end{gather*}
	Then the discrete problem is given by: \medskip
	
	Find $u_N \in V^N$ such that
	\begin{gather}
		\label{dprob:intLayer}
		B_\eps\!\left(u_N,v_N\right) = \left(f,v_N \right), \qquad \text{ for all } v_N \in V^N.
	\end{gather}
	
	Let $\hat{\phi}_0, \ldots, \hat{\phi}_k$ denote the Lagrange basis functions on the reference
	interval $[0,1]$ with respect to the points $0=\hat{x}_0 < \hat{x}_1 < \ldots < \hat{x}_k = 1$.
	We shall denote by $u_I \in V^N$ the interpolant of $u$ which is defined on each mesh interval
	$(x_{i-1},x_i)$ by
	\begin{gather*}
		u_I \big|_{(x_{i-1},x_i)} = \sum_{j=0}^k u(x_{i,j}) \phi_{i,j},
	\end{gather*}
	with $x_{i,j} := x_{i-1} + h_i \hat{x}_j$ and $\phi_{i,j}(x) := \hat{\phi}_j((x-x_{i-1})/h_i)$.
	
	Assuming $u \in C^{k+1}([x_{i-1},x_i])$, for all $j = 0, \ldots, k+1$ the standard interpolation theory
	leads to the error estimates: \medskip
	
	For $x \in (x_{i-1},x_i)$ there are $\xi_i^j \in (x_{i-1},x_i)$ such that
	\begin{gather}
		\left| (u-u_I)^{(j)}(x) \right|
			\leq C h_i^{k+1-j} \left| u^{(k+1)}(\xi_i^j) \right| \label{standInt} \\
		\intertext{and}
		\bigl\| (u-u_I)^{(j)} \bigr\|_{\infty,(x_{i-1},x_i)}
			\leq C \bigl\| u^{(j)} \bigr\|_{\infty,(x_{i-1},x_i)}. \label{standIntInfty}
	\end{gather}
	Furthermore, for all $v_N \in V^N$ the inverse inequality
	\begin{gather*}
		\snorm{v_N}_{1,(x_{i-1},x_i)} \leq C h_i^{-1} \norm{v_N}_{(x_{i-1},x_i)}
	\end{gather*}
	holds.
	
	\subsection{Finite elements of higher order}
	
	In the following we shall present the analysis for finite elements of order $k \geq 1$
	for problems of the form~\eqref{prob:intLayer}. We assume that $\lambda \in (0,k+1)$ which
	is the most difficult case. Otherwise all crucial derivatives of the solution could be
	bounded by a generic constant independent of $\eps$ and consequently optimal order
	$\eps$-uniform estimates could be proven with standard methods on uniform meshes.
	\begin{lemma}
		\label{le:energyuIuN_Pk}
		Let $u$ be the solution of~\eqref{prob:intLayer} and $u_N$ the solution of~\eqref{dprob:intLayer}
		on an arbitrary mesh. Then we have
		\begin{gather*}
			\tnorm{u_I - u_N}_\eps \leq C \tnorm{u_I-u}_\epsilon
				+ C \left(\sum_{i=-N+1}^N h_i^{-2} \norm{x (u_I-u)}_{(x_{i-1},x_i)}^2\right)^{\!1/2}.
		\end{gather*}
	\end{lemma}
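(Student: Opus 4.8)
The plan is to use the uniform coercivity of $B_\eps$ on $V^N$ together with Galerkin orthogonality. Setting $\chi := u_I - u_N \in V^N$, coercivity gives $\gamma \tnorm{\chi}_\eps^2 \leq B_\eps(\chi,\chi) = B_\eps(u_I - u, \chi)$ since $B_\eps(u - u_N, \chi) = 0$. It then remains to bound the three terms coming from $B_\eps(u_I - u,\chi) = (\eps(u_I-u)',\chi') + (a(u_I-u)',\chi) + (c(u_I-u),\chi)$.

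The first and third terms are the easy ones. For the diffusion term, $(\eps(u_I-u)',\chi') \leq \eps^{1/2}\snorm{u_I-u}_1 \cdot \eps^{1/2}\snorm{\chi}_1 \leq \tnorm{u_I-u}_\eps \tnorm{\chi}_\eps$ by Cauchy--Schwarz. For the reaction term, since $c$ is bounded, $(c(u_I-u),\chi) \leq C\norm{u_I-u}\norm{\chi} \leq C\tnorm{u_I-u}_\eps\tnorm{\chi}_\eps$. Each contributes to the first term on the right-hand side of the claimed estimate.

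The convection term $(a(u_I - u)',\chi)$ is the main obstacle: naively bounding $(a(u_I-u)',\chi) \leq C\snorm{u_I-u}_1\norm{\chi}$ would cost a full factor of $h_i^{-1}$ when converting $\snorm{u_I-u}_1$ back to something controllable, which is too lossy. The trick, following Sun and Stynes, is to integrate by parts on each mesh interval. Since $a(x) = -x\,b(x)$ (recall $x_0 = 0$), write $a(u_I-u)' = (a(u_I-u))' - a'(u_I-u)$, so on $(x_{i-1},x_i)$,
\begin{gather*}
	\int_{x_{i-1}}^{x_i} a(u_I-u)'\chi\,\mathrm{d}x
		= \bigl[a(u_I-u)\chi\bigr]_{x_{i-1}}^{x_i}
			- \int_{x_{i-1}}^{x_i} a'(u_I-u)\chi\,\mathrm{d}x
			- \int_{x_{i-1}}^{x_i} a(u_I-u)\chi'\,\mathrm{d}x.
\end{gather*}
The boundary terms telescope and vanish because $u_I - u$ is zero at all mesh nodes $x_i$ and $\chi \in C([-1,1])$ with $\chi(\pm 1)=0$. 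The $a'(u_I-u)\chi$ term is harmless: $a'$ is bounded, giving a contribution $C\norm{u_I-u}\norm{\chi} \leq C\tnorm{u_I-u}_\eps\tnorm{\chi}_\eps$. For the remaining term, use $\snorm{a(x)} = \snorm{x}\snorm{b(x)} \leq C\snorm{x}$ to get
\begin{gather*}
	\left| \int_{x_{i-1}}^{x_i} a(u_I-u)\chi'\,\mathrm{d}x \right|
		\leq C \norm{x(u_I-u)}_{(x_{i-1},x_i)} \snorm{\chi}_{1,(x_{i-1},x_i)}
		\leq C h_i^{-1} \norm{x(u_I-u)}_{(x_{i-1},x_i)} \norm{\chi}_{(x_{i-1},x_i)}
\end{gather*}
by the inverse inequality applied to $\chi \in V^N$. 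Summing over $i$ and applying Cauchy--Schwarz in the index $i$ yields $C\bigl(\sum_i h_i^{-2}\norm{x(u_I-u)}_{(x_{i-1},x_i)}^2\bigr)^{1/2}\norm{\chi} \leq C\bigl(\sum_i h_i^{-2}\norm{x(u_I-u)}_{(x_{i-1},x_i)}^2\bigr)^{1/2}\tnorm{\chi}_\eps$, which is the second term in the claimed bound. Combining all pieces, dividing through by $\tnorm{\chi}_\eps$, and recalling $\chi = u_I - u_N$ completes the proof. The key insight to highlight is that the factor $\snorm{x}$ in $a$ near the turning point is precisely what makes $\norm{x(u_I-u)}$ (rather than $\norm{u_I-u}$) the natural quantity, and this is where the graded mesh near $x_0=0$ will later pay off.
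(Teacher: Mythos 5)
Your proposal is correct and follows essentially the same route as the paper: coercivity plus Galerkin orthogonality, integration by parts to move the derivative off $u_I-u$ in the convection term, the bound $|a(x)|\leq C|x|$ together with an elementwise inverse inequality applied to $u_I-u_N\in V^N$, and a final Cauchy--Schwarz over the mesh intervals. The only cosmetic difference is that you integrate by parts interval-by-interval (using that $u_I-u$ vanishes at the mesh nodes), whereas the paper does it globally and only afterwards splits the term $\bigl(a(u_I-u),(u_I-u_N)'\bigr)$ into a sum over intervals.
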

	\begin{proof}
		By the coercivity of $B_\eps\!\left(\cdot,\cdot\right)$ and due to orthogonality, we have
		\begin{gather}
			\label{ieq:coerBeps}
			C \tnorm{u_I-u_N}^2_\eps \leq B_\eps(u_I-u_N,u_I-u_N) = B_\eps(u_I-u,u_I-u_N).
		\end{gather}
		Integrating by parts, we obtain
		\begin{multline*}
			B_\eps(u_I-u,u_I-u_N) \\
			\begin{aligned}
				& = \eps \bigl( (u_I-u)',(u_I-u_N)'\bigr)
					+ \bigl(a(u_I-u)',u_I-u_N\bigr) + \bigl(c(u_I-u),u_I-u_N\bigr) \\
				& = \eps \bigl( (u_I-u)',(u_I-u_N)'\bigr)
					- \bigl(a(u_I-u),(u_I-u_N)'\bigr) + \bigl((c-a')(u_I-u),u_I-u_N\bigr).
			\end{aligned}
		\end{multline*}
		Hence, triangle inequality and Cauchy-Schwarz inequality yield
		\begin{align}
			\left|B_\eps(u_I-u,u_I-u_N)\right| 
				& \leq \eps \left|\bigl( (u_I-u)',(u_I-u_N)'\bigr)\right|
					+ \left|\bigl((c-a')(u_I-u),u_I-u_N\bigr)\right| \nonumber \\
				& \qquad + \sum_{i=-N+1}^N\left|\bigl(a(u_I-u),(u_I-u_N)'\bigr)_{(x_{i-1},x_i)}\right| \nonumber \\
				& \leq \sqrt{\eps} \snorm{u_I-u}_1 \sqrt{\eps}\snorm{u_I-u_N}_1
					+ \norm{c-a'}_\infty \norm{u_I-u} \norm{u_I-u_N} \nonumber \\
				& \qquad + \sum_{i=-N+1}^N \norm{a(u_I-u)}_{(x_{i-1},x_i)} \snorm{u_I-u_N}_{1,(x_{i-1},x_i)}. \label{eq:sumi}
		\end{align}
		Now, for $-N+1 \leq i \leq N$ an inverse inequality and the fact that $a$ is smooth with $a(0)=0$ imply
		\begin{gather*}
			\norm{a(u_I-u)}_{(x_{i-1},x_i)} \snorm{u_I-u_N}_{1,(x_{i-1},x_i)}
				\leq C h_i^{-1} \norm{x(u_I-u)}_{(x_{i-1},x_i)} \norm{u_I-u_N}_{(x_{i-1},x_i)}.
		\end{gather*}
		Using this bound to estimate~\eqref{eq:sumi}, we get by Cauchy-Schwarz' inequality
		\begin{align*}
			\left|B_\eps(u_I-u,u_I-u_N)\right| 
				& \leq \max\bigl\{1\, ,\, \norm{c-a'}_\infty\bigr\}
					\bigl( \sqrt{\eps}\snorm{u_I-u}_1 + \norm{u_I-u}\bigr)\tnorm{u_I-u_N}_\eps \\
				& \qquad + \sum\nolimits_{i} C h_i^{-1} \norm{x(u_I-u)}_{(x_{i-1},x_i)} \norm{u_I-u_N}_{(x_{i-1},x_i)} \\
				& \leq \sqrt{2}\max\bigl\{1\, ,\, \norm{c-a'}_\infty\bigr\}
					\tnorm{u_I-u}_\eps \tnorm{u_I-u_N}_\eps \\
				& \qquad + C \left(\sum\nolimits_{i} h_i^{-2} \norm{x(u_I-u)}_{(x_{i-1},x_i)}^2\right)^{\!1/2}
					\left(\sum\nolimits_{i}\norm{u_I-u_N}_{(x_{i-1},x_i)}^2\right)^{\!1/2} \\
				& \leq \left(C \tnorm{u_I-u}_\eps 
					+ C \left(\sum\nolimits_{i} h_i^{-2} \norm{x(u_I-u)}_{(x_{i-1},x_i)}^2\right)^{\!1/2} \right)
					\tnorm{u_I-u_N}_\eps.
		\end{align*}
		Combining this and~\eqref{ieq:coerBeps} completes the proof.
	\end{proof}
	
	\begin{remark}
		For linear finite elements Sun and Stynes \cite[Lemma~5.2]{SS94} proved an estimate of the form
		\begin{gather*}
			\tnorm{u_I-u_N}_\eps \leq C \left(\norm{u-u_I}^{1/2} + \max_i{h_i^2}\right)\!,
		\end{gather*}
		see also Lemma~\ref{le:energyuIuNToL2uILiseikin}. Aside from the fact that their argumentation
		works for linear elements only, such an estimate would not enable optimal estimates for finite
		elements of higher order.
	\end{remark}
	

	The next two lemmas give bounds for the interpolation error on the layer-adapted mesh proposed
	by Liseikin.
	\begin{lemma}
		\label{le:IntErrorLiseikin}
		Let $u$ be the solution of problem~\eqref{prob:intLayer}. Let $u_I \in V^N$ interpolate
		to $u$ on the mesh generated by~\eqref{eq:liseikinMeshGenerator} with
		$0< \alpha \leq \min\{\lambda/(k+1), 1/(2(k+1))\}$. Then
		\begin{gather}
			\label{ieq:L2IntLiseikin}
			\norm{u-u_I} \leq C N^{-(k+1)}
		\end{gather}
		and
		\begin{gather}
			\label{ieq:energyIntLiseikin}
			\tnorm{u-u_I}_\eps \leq C N^{-k}.
		\end{gather}
	\end{lemma}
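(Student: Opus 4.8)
The plan is to split both norms into contributions from the ``inner'' mesh intervals, where the layer bound \eqref{ieq:innerLayerBounds} (with $i=k+1$) gives $|u^{(k+1)}(x)| \le C(\eps^{1/2}+|x|)^{\lambda-(k+1)}$, and to treat the interval adjacent to the turning point separately according to whether $\eps \ge h^{2/\alpha}$ or $\eps \le h^{2/\alpha}$. By symmetry of the mesh it suffices to work on $[0,1]$, i.e.\ with intervals $i = 1, \ldots, N$. On each interval $(x_{i-1},x_i)$ with $i \ge 2$ we use the standard interpolation estimate \eqref{standInt}, so that for the relevant derivative order $j$ we have $|(u-u_I)^{(j)}(x)| \le C h_i^{k+1-j} |u^{(k+1)}(\xi_i^j)| \le C h_i^{k+1-j} (\,x_{i-1}+\eps^{1/2}\,)^{\lambda-(k+1)}$, using that $(\eps^{1/2}+|x|)^{\lambda-(k+1)}$ is decreasing in $|x|$ and $x_{i-1} \le \xi_i^j$. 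For $i=1$ and $\eps \ge h^{2/\alpha}$ one argues the same way with $x_0 = 0$, giving the factor $\eps^{(\lambda-(k+1))/2}$; for $i=1$ and $\eps \le h^{2/\alpha}$ the bound \eqref{standInt} is useless and one instead uses \eqref{standIntInfty} together with the crude layer bound on $[0,x_1]$ and the estimate $x_1 \le C h^{k+1}$ from Lemma~\ref{le:liseikinh^kBounds} (applicable since $\alpha \le 1/(k+1)$).

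For the $L^2$-estimate \eqref{ieq:L2IntLiseikin}, squaring and summing: on interval $i \ge 2$ we get $\norm{u-u_I}_{(x_{i-1},x_i)}^2 \le C h_i \cdot h_i^{2(k+1)} (x_{i-1}+\eps^{1/2})^{2(\lambda-(k+1))} = C h_i^{2(k+1)+1}(x_{i-1}+\eps^{1/2})^{2(\lambda-(k+1))}$. The key is that $h_i^{2(k+1)}(x_{i-1}+\eps^{1/2})^{2(\lambda-(k+1))} \le C h^{2(k+1)}$, which is exactly Lemma~\ref{le:liseikinh^kBounds} applied with $\hat\alpha = 2\lambda$ and exponent $2(k+1)$ — the hypothesis $\alpha \le \hat\alpha/(2(k+1)) = \lambda/(k+1)$ is one of our two assumptions. (One must be a little careful that the exponent in Lemma~\ref{le:liseikinh^kBounds} is integral; since $2(k+1) \in \ZBN$ and $\hat\alpha - 2(k+1) = 2\lambda - 2(k+1)$ may be negative, the lemma is stated in exactly the form needed.) Then $\sum_{i\ge2} h_i \cdot C h^{2(k+1)} \le C h^{2(k+1)}$ because $\sum h_i \le 1$. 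The $i=1$ term in the case $\eps \ge h^{2/\alpha}$ is handled by the corresponding line of Lemma~\ref{le:liseikinh^kBounds}; in the case $\eps \le h^{2/\alpha}$ one bounds $\norm{u-u_I}_{(0,x_1)}^2 \le x_1 \norm{u-u_I}_{\infty,(0,x_1)}^2 \le C x_1 \cdot C \le C h^{k+1} \le C h^{2(k+1)}$ (using $h \le 1$; actually $x_1 \le C h^{2(k+1)}$ already follows by taking ``$k$'' $=2(k+1)$ in the lemma, since $\alpha \le 1/(2(k+1))$). Taking square roots gives \eqref{ieq:L2IntLiseikin}.

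For the energy estimate \eqref{ieq:energyIntLiseikin} we must bound $\eps \snorm{u-u_I}_1^2 + \norm{u-u_I}^2$. The second term is already $\le C N^{-2(k+1)} \le C N^{-2k}$. For the first, on interval $i\ge2$ we have $\eps \norm{(u-u_I)'}_{(x_{i-1},x_i)}^2 \le C \eps\, h_i^{2k+1}(x_{i-1}+\eps^{1/2})^{2(\lambda-(k+1))}$; writing $\eps (x_{i-1}+\eps^{1/2})^{-2} = (\eps^{1/2}/(x_{i-1}+\eps^{1/2}))^2 \le 1$, this is $\le C h_i^{2k+1}(x_{i-1}+\eps^{1/2})^{2(\lambda-k)}\cdot\big(\tfrac{\eps^{1/2}}{x_{i-1}+\eps^{1/2}}\big)^{\!2}\le C h_i \cdot h_i^{2k}(x_{i-1}+\eps^{1/2})^{2(\lambda-k)}$, and now Lemma~\ref{le:liseikinh^kBounds} with $\hat\alpha = 2\lambda$, exponent $2k$, and hypothesis $\alpha \le \lambda/k$ (weaker than $\lambda/(k+1)$, hence fine) gives $h_i^{2k}(x_{i-1}+\eps^{1/2})^{2(\lambda-k)} \le C h^{2k}$; summing over $i$ against $\sum h_i \le 1$ yields $C h^{2k} = C N^{-2k}$. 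The interval $i=1$ is treated by the two-case dichotomy exactly as before (in the case $\eps \ge h^{2/\alpha}$ via the $i=1$ line of Lemma~\ref{le:liseikinh^kBounds}; in the case $\eps \le h^{2/\alpha}$ via \eqref{standIntInfty}, noting $\eps \norm{(u-u_I)'}_{(0,x_1)}^2 \le \eps\, x_1 \norm{u'}_{\infty,(0,x_1)}^2 \le C \eps\, x_1 \cdot \eps^{\lambda-1}$ if $\lambda<1$, or $\le C\eps x_1$ if $\lambda\ge1$; either way, bounded by $C x_1 \le C N^{-2k}$ since $\eps \le h^{2/\alpha}$ forces $\eps^{\lambda} \le h^{2\lambda/\alpha}$ and one checks this is harmless, or more simply $\eps\le1$ and $x_1\le Ch^{2k}$). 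Summing all pieces and taking the square root gives \eqref{ieq:energyIntLiseikin}.

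The main obstacle is bookkeeping around the turning-point interval $i=1$: the estimate \eqref{standInt} degenerates there because $u^{(k+1)}$ blows up, so one genuinely needs the split $\eps \gtrless h^{2/\alpha}$ and, in the subcritical regime, the reduction to the $L^\infty$ bound \eqref{standIntInfty} combined with the sharp mesh-width estimate $x_1 \le C h^m$ from Lemma~\ref{le:liseikinh^kBounds}. Everything else is a routine application of Lemmas~\ref{le:liseikinh^kBounds} with the two stated restrictions on $\alpha$ playing precisely the roles $\alpha \le \lambda/(k+1)$ (for the $L^2$-interpolation exponent $2(k+1)$) and $\alpha \le 1/(2(k+1))$ (for controlling $x_1$ and for the ``$h_i \le Ch$'' fallback).
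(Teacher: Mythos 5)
Your proposal is correct and follows essentially the same route as the paper's proof: interval-wise application of \eqref{standInt} with the layer bound \eqref{ieq:innerLayerBounds} and Lemma~\ref{le:liseikinh^kBounds} for $i\geq 2$, plus the dichotomy $\eps \gtrless h^{2/\alpha}$ on the turning-point interval with \eqref{standIntInfty} and the bound on $x_1$ in the subcritical case (the paper merely streamlines this with the unified index $j\in\{0,1\}$ and applies the mesh lemma with $\hat{\alpha}=\lambda$ before squaring rather than with $\hat{\alpha}=2\lambda$ after, a cosmetic difference). The only blemish is the transient chain ``$\leq C h^{k+1} \leq C h^{2(k+1)}$'', which is reversed for $h\leq 1$; your parenthetical correction $x_1 \leq C h^{2(k+1)}$ (valid since $\alpha \leq 1/(2(k+1))$) is exactly what is needed and is what the paper uses.
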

	\begin{proof}
		Thanks to the symmetry of the problem, we shall consider only $x \in [0,1]$. Furthermore,
		we use $j \in \{0,1\}$ to switch between the $L^2$-norm term and the $\eps$-weighted
		$H^1$-seminorm term.
		
		Let $x \in (x_{i-1},x_i)$ where $2\leq i \leq N$. Then for some $\xi_i^j \in (x_{i-1},x_i)$
		\begin{align*}
			\eps^{j/2} \left| (u-u_I)^{(j)}(x) \right|
				& \leq C \eps^{j/2} h_i^{k+1-j} \left| u^{(k+1)}(\xi_i^j) \right| \\
				& \leq C \eps^{j/2} h_i^{k+1-j} \left(1 + \left( x_{i-1} + \eps^{1/2} \right)^{\lambda - (k+1)}\right) \\
				& \leq C N^{-(k+1-j)},
		\end{align*}
		where we used~\eqref{standInt}, \eqref{ieq:innerLayerBounds}, and Lemma~\ref{le:liseikinh^kBounds}. Hence,
		\begin{gather*}
			\eps^j \int_{x_1}^1 \left(( u-u_I)^{(j)}(x)\right)^2 dx \leq C N^{-2(k+1-j)}.
		\end{gather*}
		
		Now, let $x \in (x_0,x_1)$. We consider two different cases.
		
		First, if $\eps \geq h^{2/\alpha}$ then
		as above Lemma~\ref{le:liseikinh^kBounds} yields
		\begin{align*}
			\eps^{j/2}\left| (u-u_I)^{(j)}(x) \right|
				& \leq C \eps^{j/2} h_1^{k+1-j} \left(1 + \left( x_0 + \eps^{1/2} \right)^{\lambda - (k+1)}\right) \\
				& \leq C h_1^{k+1-j} \left(1+ \eps^{(\lambda-(k+1-j))/2}\right) \\
				& \leq C N^{-(k+1-j)}
		\end{align*}
		and therefore
		\begin{gather*}
			\eps^j \int_0^{x_1} \left( (u-u_I)^{(j)}(x)\right)^2 dx
				\leq \left( C N^{-(k+1-j)} \right)^2 \int_{0}^{x_1} \! 1 dx
				\leq C h_1 N^{-2(k+1-j)}.
		\end{gather*}
		
		If $\eps \leq h^{2/\alpha}$ we estimate the integral directly. We have
		\begin{gather*}
			\eps^j \int_0^{x_1} \left( (u-u_I)^{(j)}(x)\right)^2 dx
				\leq C \eps^j \int_0^{x_1} \bigl\|u^{(j)}\bigr\|_{\infty,(0,x_1)}^2 dx
				\leq C \eps^j \left(1+\eps^{(\lambda - j)/2}\right)^{\!2} x_1
				\leq C N^{-2(k+1)}
		\end{gather*}
		by~\eqref{standIntInfty}, \eqref{ieq:innerLayerBounds}, and Lemma~\ref{le:liseikinh^kBounds}.
		
		Combining the above estimates for $j=0$ and using symmetry on $[-1,0]$ we get~\eqref{ieq:L2IntLiseikin}.
		This estimate together with the above estimates for $j=1$ immediately gives~\eqref{ieq:energyIntLiseikin}.
	\end{proof}
	
	It remains to estimate the second term in Lemma~\ref{le:energyuIuN_Pk}.
	\begin{lemma}
		\label{le:sumIntLiseikin}
		Let $u$ be the solution of problem~\eqref{prob:intLayer}. Let $u_I \in V^N$ interpolate
		to $u$ on the mesh generated by~\eqref{eq:liseikinMeshGenerator} with
		$0< \alpha \leq \min\{\lambda/(k+1), 1/(2(k+1))\}$. Then
		\begin{gather}
			\label{ieq:sumIntLiseikin}
			\left(\sum_{i=-N+1}^N h_i^{-2} \norm{x (u_I-u)}_{(x_{i-1},x_i)}^2\right)^{\!1/2} \leq C N^{-k}.
		\end{gather}
	\end{lemma}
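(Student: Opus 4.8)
The plan is to bound the sum interval-by-interval, splitting into the same regimes used in Lemma~\ref{le:IntErrorLiseikin}: the ``coarse'' intervals $2 \leq i \leq N$ (and their mirror images), and the first interval $(x_0,x_1)$ with its two sub-cases $\eps \geq h^{2/\alpha}$ and $\eps \leq h^{2/\alpha}$. On each interval I would first pull out the factor $x$ using that $|x| \leq x_i$ on $(x_{i-1},x_i)$ for $x \geq 0$, so that
\begin{gather*}
	h_i^{-2} \norm{x(u_I-u)}_{(x_{i-1},x_i)}^2 \leq h_i^{-2} x_i^2 \norm{u_I-u}_{(x_{i-1},x_i)}^2.
\end{gather*}
Then I would apply the pointwise interpolation bound~\eqref{standInt} with $j=0$ together with the layer bound~\eqref{ieq:innerLayerBounds}, giving $\norm{u_I-u}_{(x_{i-1},x_i)}^2 \leq C h_i^{2(k+1)}(1 + (x_{i-1}+\eps^{1/2})^{\lambda-(k+1)})^2 \, h_i$ for the coarse intervals.

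For the coarse intervals the key observation is that the troublesome extra factor $h_i^{-2} x_i^2$ is harmless because $x_i \leq C(x_{i-1} + \eps^{1/2})$ on the Liseikin mesh (this follows from the mesh bounds in Section~\ref{sec:LisMesh}, since $h_i \leq C h (x_{i-1}+\eps^{1/2})^{1-\lambda}\cdot(\dots)$ stays comparable to $x_{i-1}+\eps^{1/2}$; alternatively $x_i + \eps^{1/2} \leq C(x_{i-1}+\eps^{1/2})$ is immediate from $h_i \leq C h \le x_{i-1}$ for $i \geq 2$ away from the layer and from direct computation near it). Thus $h_i^{-2} x_i^2 \norm{u_I-u}^2 \leq C h_i^{2(k+1)} h_i^{-2} (x_{i-1}+\eps^{1/2})^{2 + 2(\lambda-(k+1))} h_i = C\, h_i^{k} \cdot h_i^{k+1-2}(x_{i-1}+\eps^{1/2})^{2(\lambda-k)}$; invoking Lemma~\ref{le:liseikinh^kBounds} with $\hat\alpha = 2(k-\lambda+1) > 0$ and exponent $k-1$ (so that $h_i^{k-1}(x_{i-1}+\eps^{1/2})^{\hat\alpha-(k-1)} \leq C h^{k-1}$, which needs $\alpha \leq \hat\alpha/(k-1)$, within our range), and then summing the geometric-type series over $i$ against one extra factor $h_i \leq Ch$, yields $\sum_{i\geq 2} h_i^{-2}\norm{x(u_I-u)}_{(x_{i-1},x_i)}^2 \leq C N \cdot h^{2k+1} \cdot h^{-1}$-type bound that collapses to $C N^{-2k}$. (When $k=1$ the factor $h_i^{k-1}(x_{i-1}+\eps^{1/2})^{\hat\alpha-(k-1)}$ is just $(x_{i-1}+\eps^{1/2})^{\hat\alpha}$, which is bounded by a constant.)

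For the first interval $(x_0,x_1)$: when $\eps \geq h^{2/\alpha}$ I use that $|x| \leq x_1 \leq C h_1$ and $h_1^{-1} x_1 \leq C$, so $h_1^{-2}\norm{x(u_I-u)}^2 \leq C \norm{u_I-u}_{(x_0,x_1)}^2 \leq C N^{-2(k+1)} h_1 \leq C N^{-2k}$ after the same estimate as in Lemma~\ref{le:IntErrorLiseikin}. When $\eps \leq h^{2/\alpha}$ I use $|x| \leq x_1$, $\norm{u_I - u}_{\infty,(x_0,x_1)} \leq C\|u\|_{\infty}$-type bound from~\eqref{standIntInfty}, so $h_1^{-2}\norm{x(u_I-u)}^2 \leq C h_1^{-2} x_1^2 (1+\eps^{(\lambda)/2})^2 x_1 \leq C h_1^{-2} x_1^3$, and Lemma~\ref{le:liseikinh^kBounds} gives $x_1 \leq C h^{k}$ (and also controls $h_1$ from below appropriately, or one simply notes $x_1 = h_1$ since $x_0 = 0$), which makes this term $O(N^{-2k})$ as well. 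Finally I add the mirror-image contributions from $[-1,0]$ by symmetry and take square roots.

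The main obstacle I anticipate is bookkeeping the exponents: one must verify that the particular power $\hat\alpha = 2(k+1-\lambda)$ (or the truncated version respecting $\lambda < k+1$) together with the reduced mesh-power $k-1$ really does satisfy the hypothesis $\alpha \leq \min\{\hat\alpha/(k-1), 1/(k-1)\}$ of Lemma~\ref{le:liseikinh^kBounds} under the standing assumption $0 < \alpha \leq \min\{\lambda/(k+1), 1/(2(k+1))\}$; this should hold but needs a short case check, and the degenerate case $k=1$ (where no mesh power is needed and the layer bound $(x_{i-1}+\eps^{1/2})^{\lambda-2}$ is handled by $x_i^2$ directly) must be treated separately. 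The second delicate point is the comparability $x_i + \eps^{1/2} \leq C(x_{i-1} + \eps^{1/2})$; if a clean statement of this is not already extractable from Section~\ref{sec:LisMesh}, I would insert a one-line argument from the explicit form~\eqref{eq:liseikinMeshGenerator}.
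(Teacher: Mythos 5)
Your plan is structurally the paper's own: interval-wise pointwise bounds via \eqref{standInt} and \eqref{ieq:innerLayerBounds}, Lemma~\ref{le:liseikinh^kBounds}, the case split at $i=1$ according to $\eps \gtrless h^{2/\alpha}$, and symmetry; also the comparability $x_i+\eps^{1/2}\leq C(x_{i-1}+\eps^{1/2})$ you want for $i\geq 2$ does hold (the ratio is $\left(i/(i-1)\right)^{1/\alpha}\leq 2^{1/\alpha}$ by \eqref{eq:xiPsqrteps}), while the paper sidesteps it by writing $x\leq x_{i-1}+h_i$ and using $x_{i-1}(x_{i-1}+\eps^{1/2})^{\lambda-(k+1)}\leq (x_{i-1}+\eps^{1/2})^{\lambda-k}$. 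However, the exponent bookkeeping --- which you yourself identify as the crux --- is wrong as written. For $2\leq i\leq N$ your chain correctly gives the per-interval bound $C\,h_i^{2k+1}(x_{i-1}+\eps^{1/2})^{2(\lambda-k)}$, but your displayed ``equality'' $C h_i^{2(k+1)}h_i^{-2}(x_{i-1}+\eps^{1/2})^{2+2(\lambda-(k+1))}h_i = C h_i^{k}\, h_i^{k-1}(\dots)$ silently drops a factor $h_i^{2}$; the proposed application of Lemma~\ref{le:liseikinh^kBounds} with $\hat{\alpha}=2(k+1-\lambda)$ and mesh power $k-1$ controls $h_i^{k-1}(x_{i-1}+\eps^{1/2})^{2(k+1-\lambda)-(k-1)}$, which is not the quantity you need --- matching the required exponent $2(\lambda-k)$ would force $\hat{\alpha}=2\lambda-k-1$, in general negative and hence outside the lemma --- and the side condition you propose to check, $\alpha\leq\hat{\alpha}/(k-1)$, is not the relevant one; finally your count ``$N\cdot h^{2k+1}\cdot h^{-1}$'' equals $N^{1-2k}$, not $N^{-2k}$.

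All of this is repairable within your own scheme, and after repair it coincides with the paper's proof: keep the full power and write each term as $h_i\cdot h_i^{2k}(x_{i-1}+\eps^{1/2})^{2(\lambda-k)}$, then apply Lemma~\ref{le:liseikinh^kBounds} with mesh power $2k$ and $\hat{\alpha}=2\lambda$ (admissible since $\alpha\leq\lambda/(k+1)\leq\lambda/k$), so each term is $\leq C h_i h^{2k}$ and $\sum_i h_i\leq 1$ gives $CN^{-2k}$; equivalently, as the paper does, apply the lemma with $\hat{\alpha}=\lambda$ and powers $k$ and $k+1$ before squaring. In the subcase $\eps\leq h^{2/\alpha}$ your cited bound $x_1\leq Ch^{k}$ is not sufficient: the first-interval contribution is of size $x_1$ and must be $\leq CN^{-2k}$, so you need the second assertion of Lemma~\ref{le:liseikinh^kBounds} with power at least $2k$, which is available because $\alpha\leq 1/(2(k+1))$ yields $x_1\leq Ch^{2(k+1)}$ (exactly what the paper uses). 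The remaining pieces (the case $\eps\geq h^{2/\alpha}$ on $(0,x_1)$, the identity $x_1=h_1$, boundedness of $u$, symmetry) are fine.
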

	\begin{proof}
		The proof is similar to the proof of Lemma~\ref{le:IntErrorLiseikin} but advanced in some way.
		
		Let $2\leq i \leq N$ and $x \in (x_{i-1},x_i)$. Then for some $\xi_i \in (x_{i-1},x_i)$
		\begin{align*}
			x \left| (u_I-u)(x) \right|
				& \leq C x h_i^{k+1} \left| u^{(k+1)}(\xi_i) \right| \\
				& \leq C (x_{i-1}+h_i) h_i^{k+1} \left(1 + \left( x_{i-1} + \eps^{1/2} \right)^{\lambda - (k+1)}\right) \\
				& \leq C h_i \left(N^{-k}+N^{-(k+1)}\right) \leq C h_i N^{-k},
		\end{align*}
		where we used~\eqref{standInt}, \eqref{ieq:innerLayerBounds}, and Lemma~\ref{le:liseikinh^kBounds}.
		Hence, for $2 \leq i \leq N$
		\begin{gather*}
			h_i^{-2} \norm{x(u_I-u)}_{(x_{i-1},x_i)}^2
				= h_i^{-2} \int_{x_{i-1}}^{x_i} \left( x( u_I-u)(x)\right)^2 dx
				\leq h_i^{-2} \left( C h_i N^{-k}\right)^2 \int_{x_{i-1}}^{x_i} \! 1 dx
				\leq C h_i N^{-2k}.
		\end{gather*}
		
		Now, let $i=1$ and $x \in (x_0,x_1)$. We consider two different cases.
		
		First, if $\eps \geq h^{2/\alpha}$ then
		as above Lemma~\ref{le:liseikinh^kBounds} yields
		\begin{align*}
			x \left| (u_I-u)(x) \right|
				& \leq C x h_1^{k+1} \left(1 + \left( x_0 + \eps^{1/2} \right)^{\lambda - (k+1)}\right) \\
				& \leq C h_1^{k+2} \left(1+ \eps^{(\lambda-(k+1))/2}\right) \\
				& \leq C h_1 N^{-(k+1)}
		\end{align*}
		and therefore
		\begin{gather*}
			h_1^{-2} \norm{x(u_I-u)}_{(x_0,x_1)}^2
				\leq h_1^{-2} \left( C h_1 N^{-(k+1)}\right)^2 \int_{x_0}^{x_1} \! 1 dx
				\leq C h_1 N^{-2(k+1)}.
		\end{gather*}
		
		If $\eps \leq h^{2/\alpha}$ we estimate the integral directly. We have
		\begin{align*}
			h_1^{-2} \int_0^{x_1} \left( x (u_I-u)(x)\right)^2 dx
				& \leq h_1^{-2} \norm{u_I-u}_{\infty,(0,x_1)}^2 \int_0^{x_1} x^2 dx \\
				& \leq C h_1^{-2} \norm{u}_{\infty,(0,x_1)}^2 x_1^3 \\
				& \leq C x_1 \leq C N^{-2(k+1)}
		\end{align*}
		by~\eqref{standIntInfty}, \eqref{ieq:innerLayerBounds}, and Lemma~\ref{le:liseikinh^kBounds}.
		
		Summing up the above estimates gives in the worst case
		\begin{gather*}
			\sum_{i=1}^N h_i^{-2} \norm{x(u_I-u)}_{(x_{i-1},x_i)}^2
				\leq C N^{-2(k+1)} + \sum_{i=2}^N C h_i N^{-2k}
				\leq C N^{-2k} \left(\sum_{i=2}^N h_i + N^{-2}\right)
				\leq C N^{-2k}.
		\end{gather*}
		Since, thanks to symmetry the sum for $i=-N+1,\ldots,0$ can be bounded analogously, the proof
		is completed.
	\end{proof}
	
	
	Now, we are able to prove the $\eps$-uniform error estimate of $P_k$-FEM in the energy norm.
	\begin{theorem}
		\label{th:energyErrorLiseikin}
		Let $u$ be the solution of~\eqref{prob:intLayer} and $u_N$ the solution
		of~\eqref{dprob:intLayer} on a mesh generated by~\eqref{eq:liseikinMeshGenerator} with
		$0 < \alpha \leq \min\{\lambda/(k+1), 1/(2(k+1))\}$. Then we have
		\begin{gather*}
			\tnorm{u-u_N}_\eps \leq C N^{-k}.
		\end{gather*}
	\end{theorem}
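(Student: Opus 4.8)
The plan is to combine the triangle inequality with the three preceding lemmas in the obvious way. First I would write
\begin{gather*}
	\tnorm{u-u_N}_\eps \leq \tnorm{u-u_I}_\eps + \tnorm{u_I-u_N}_\eps,
\end{gather*}
so that the task reduces to bounding each summand by $C N^{-k}$. The first term is handled immediately by the energy interpolation estimate~\eqref{ieq:energyIntLiseikin} of Lemma~\ref{le:IntErrorLiseikin}, since the hypothesis $0 < \alpha \leq \min\{\lambda/(k+1), 1/(2(k+1))\}$ is exactly the one required there.

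For the second term I would invoke Lemma~\ref{le:energyuIuN_Pk}, which gives
\begin{gather*}
	\tnorm{u_I-u_N}_\eps \leq C \tnorm{u_I-u}_\eps
		+ C \left(\sum_{i=-N+1}^N h_i^{-2} \norm{x(u_I-u)}_{(x_{i-1},x_i)}^2\right)^{\!1/2}.
\end{gather*}
The first term on the right is again $\leq C N^{-k}$ by~\eqref{ieq:energyIntLiseikin}, and the second term is precisely the quantity estimated in Lemma~\ref{le:sumIntLiseikin}, which bounds it by $C N^{-k}$ under the same restriction on $\alpha$. Adding these contributions and folding constants completes the argument.

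In short, the theorem is an assembly result: all the analytic work has already been done in Lemmas~\ref{le:energyuIuN_Pk}, \ref{le:IntErrorLiseikin}, and~\ref{le:sumIntLiseikin}, and here one merely chains them together. Accordingly I do not expect any real obstacle; the only point worth a moment's care is checking that the coercivity assumption~\eqref{gamma} and the mesh-parameter constraint on $\alpha$ are consistently those assumed by each lemma being cited, which they are. Hence $\tnorm{u-u_N}_\eps \leq C N^{-k}$.
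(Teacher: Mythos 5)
Your proposal is correct and coincides with the paper's own argument: the paper likewise obtains the bound by the triangle inequality combined with Lemma~\ref{le:energyuIuN_Pk}, estimate~\eqref{ieq:energyIntLiseikin}, and estimate~\eqref{ieq:sumIntLiseikin}. No issues.
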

	\begin{proof}
		The bound in the energy norm follows easily from the triangle inequality, Lemma~\ref{le:energyuIuN_Pk},
		\eqref{ieq:energyIntLiseikin}, and~\eqref{ieq:sumIntLiseikin}.
	\end{proof}
	
	
\subsection{Special features of linear finite elements}
	In this section we present some special features of linear finite elements. So, we shall
	assume $k=1$. The following two lemmas hold for arbitrary meshes and are borrowed
	from~\cite{SS94}. In particular, they show that for linear finite elements the $L^2$-norm
	interpolation error estimate~\eqref{ieq:L2IntLiseikin} suffices to prove the $\eps$-uniform
	convergence in the energy norm.
	\begin{lemma}[see {\cite[Lemma~5.1]{SS94}}]
		\label{le:energyuIuNToL2uI}
		Let $u$ be the solution of problem~\eqref{prob:intLayer} and $k=1$.
		Then on an arbitrary mesh we have
		\begin{gather}
			\label{ieq:energyToL2IntLiseikin}
			\tnorm{u-u_I}_\eps^2 \leq C \norm{u-u_I}
		\end{gather}
		and
		\begin{gather}
			\label{ieq:x(uMuI)'}
			\int_{-1}^1 \left(x \left(u-u_I\right)'(x)\right)^2 dx \leq C \norm{u-u_I}\!.
		\end{gather}
	\end{lemma}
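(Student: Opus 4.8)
The plan is to prove both inequalities of Lemma~\ref{le:energyuIuNToL2uI} for $k=1$ by exploiting the fact that on each mesh interval $(x_{i-1},x_i)$ the second derivative of the linear interpolant vanishes, so that $(u-u_I)''=u''$ there, combined with the layer bound~\eqref{ieq:innerLayerBounds}. The key observation is that for linear interpolation the standard estimates can be sharpened: on $(x_{i-1},x_i)$ one has $\snorm{(u-u_I)'(x)} \leq C h_i \norm{u''}_{\infty,(x_{i-1},x_i)}$ as well as the $L^2$-version $\norm{(u-u_I)'}_{(x_{i-1},x_i)} \leq C h_i \norm{u''}_{(x_{i-1},x_i)}$, and crucially $\norm{u-u_I}_{\infty,(x_{i-1},x_i)} \leq C h_i^2 \norm{u''}_{\infty,(x_{i-1},x_i)}$. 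So the strategy is to bound the $\eps$-weighted seminorm and the weighted derivative integral in terms of $\norm{u-u_I}$ by inserting one factor of the interpolation error and estimating the remaining factor using the layer bounds and the mesh structure.

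First I would handle~\eqref{ieq:energyToL2IntLiseikin}. Since $\tnorm{u-u_I}_\eps^2 = \eps\snorm{u-u_I}_1^2 + \norm{u-u_I}^2$ and the second term is trivially $\leq C\norm{u-u_I}$ (as $\norm{u-u_I}\leq C$), the real work is to show $\eps\snorm{u-u_I}_1^2 \leq C\norm{u-u_I}$. On each interval I would write $\eps\norm{(u-u_I)'}_{(x_{i-1},x_i)}^2 \leq \eps \norm{(u-u_I)'}_{\infty,(x_{i-1},x_i)} \norm{(u-u_I)'}_{L^1,(x_{i-1},x_i)}$ or, more directly, use Cauchy--Schwarz after integration by parts: $\int (u-u_I)'^2 = -\int (u-u_I)(u-u_I)'' + \text{boundary terms}$, where the boundary terms vanish because $u-u_I$ is continuous and zero at all nodes — wait, $u-u_I$ is zero at all mesh nodes, so on each interval $\int_{x_{i-1}}^{x_i}(u-u_I)'^2 = -\int_{x_{i-1}}^{x_i}(u-u_I)u''$. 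Thus $\eps\snorm{u-u_I}_1^2 = -\eps\int_{-1}^1 (u-u_I)u'' \leq \eps\norm{u-u_I}\,\norm{u''}$, and it remains to check $\eps\norm{u''}\leq C$. By~\eqref{ieq:innerLayerBounds} with $i=2$, $\eps\snorm{u''(x)}\leq C\eps(1+(\eps^{1/2}+\snorm{x})^{\lambda-2})$; integrating, the dangerous part is $\eps\int_{-1}^1 (\eps^{1/2}+\snorm{x})^{2(\lambda-2)}dx$, which for $\lambda<3/2$ behaves like $\eps\cdot\eps^{(2\lambda-3)/2}=\eps^{(2\lambda-1)/2}$, bounded for $\lambda\geq 1/2$; for $\lambda\geq 3/2$ the integral is simply bounded and the $\eps$ factor makes it small. (One must take a little care over whether $2(\lambda-2)<-1$, i.e. $\lambda<3/2$, splitting cases, but in every case $\eps\norm{u''}^2\leq C$, hence $\eps\snorm{u-u_I}_1^2\leq C\norm{u-u_I}$.)

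For~\eqref{ieq:x(uMuI)'} I would argue similarly but now carrying the weight $x$. Integration by parts on each interval gives $\int_{x_{i-1}}^{x_i} x^2\bigl((u-u_I)'\bigr)^2 dx = -\int_{x_{i-1}}^{x_i} (u-u_I)\bigl(x^2(u-u_I)'\bigr)' dx + [\,x^2(u-u_I)(u-u_I)'\,]_{x_{i-1}}^{x_i}$; the boundary term vanishes since $(u-u_I)$ is zero at the nodes, and $\bigl(x^2(u-u_I)'\bigr)' = 2x(u-u_I)' + x^2(u-u_I)'' = 2x(u-u_I)' + x^2 u''$. This produces the recursive term $2\int x(u-u_I)\cdot x(u-u_I)'$, absorbable by Cauchy--Schwarz and a Young-type inequality into the left side (using $2ab\leq \frac12 a^2 + 2b^2$ with $a = x(u-u_I)'$), leaving $\int x^2(u-u_I)'^2 \leq C\norm{u-u_I}^2 + C\snorm{\int x^2(u-u_I)u''}$, and then $\snorm{\int_{-1}^1 x^2(u-u_I)u''}\leq \norm{u-u_I}\,\norm{x^2 u''}$. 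The remaining point is $\norm{x^2 u''}\leq C$, which follows from~\eqref{ieq:innerLayerBounds}: $x^2\snorm{u''(x)}\leq Cx^2(1+(\eps^{1/2}+\snorm{x})^{\lambda-2})\leq C(1+(\eps^{1/2}+\snorm{x})^{\lambda})$ since $x^2(\eps^{1/2}+\snorm{x})^{\lambda-2}\leq (\eps^{1/2}+\snorm{x})^{\lambda}$, and $(\eps^{1/2}+\snorm{x})^{\lambda}$ is bounded for $\lambda>0$. Hence $\int x^2(u-u_I)'^2 \leq C\norm{u-u_I}$ after combining, using $\norm{u-u_I}\leq C$ once more.

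The main obstacle I expect is the bookkeeping in~\eqref{ieq:x(uMuI)'}: one must organise the integration by parts interval-by-interval, confirm the interior node contributions cancel (they do, because $u-u_I$ vanishes there, even though $(u-u_I)'$ jumps), and correctly absorb the cross term $\int x(u-u_I)\cdot x(u-u_I)'$ without circularity. The analytic content — that $\eps\norm{u''}$ and $\norm{x^2 u''}$ are $\eps$-uniformly bounded — is a short computation from~\eqref{ieq:innerLayerBounds}, but getting the case distinction on $\lambda$ right (integrability of $(\eps^{1/2}+\snorm{x})^{2(\lambda-2)}$) is where a careful reader should slow down. Since this lemma is attributed to~\cite[Lemma~5.1]{SS94}, I would also check that their argument, given for a piecewise uniform mesh, used only the node-interpolation structure and not any mesh-specific property; it appears it does not, so the proof transfers verbatim to an arbitrary mesh.
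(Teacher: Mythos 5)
The paper never proves this lemma: its ``proof'' is the citation of \cite[Lemma~5.1]{SS94}, so any self-contained argument is necessarily a different route. Yours is the natural one and its skeleton is sound on an arbitrary mesh: elementwise integration by parts, with all boundary terms killed because $u-u_I$ vanishes at every mesh node and $(u_I)''=0$ on each cell, reduces $\eps\snorm{u-u_I}_1^2$ to $-\eps\int(u-u_I)u''$ and reduces $\int\bigl(x(u-u_I)'\bigr)^2$ to $\norm{u-u_I}^2$-type terms plus $-\int x^2u''(u-u_I)$; the weight kills the singularity of $u''$ completely, since $x^2\snorm{u''(x)}\le C$ uniformly for every $\lambda>0$, and $\norm{u-u_I}\le C$ then gives \eqref{ieq:x(uMuI)'}. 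Your Young-inequality absorption of the cross term works; alternatively a second integration by parts turns $2\int x(u-u_I)(u-u_I)'\,dx$ into $-\int(u-u_I)^2dx$, which is exactly the manipulation the paper itself performs on $(0,x_1)$ inside the supercloseness proof, so the technique is consistent with the rest of the paper.

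The one genuine flaw is in your verification of \eqref{ieq:energyToL2IntLiseikin}. You correctly reduce the task to $\eps\norm{u''}\le C$, but then you test the wrong quantity: boundedness of $\eps\norm{u''}$ means boundedness of $\eps^2\int(\eps^{1/2}+\snorm{x})^{2(\lambda-2)}dx$, whereas you examine $\eps\int(\eps^{1/2}+\snorm{x})^{2(\lambda-2)}dx=\eps\norm{u''}^2$ (up to constants), find it bounded only ``for $\lambda\ge 1/2$'', and then nevertheless assert $\eps\norm{u''}^2\le C$ ``in every case'' --- which is false for $\lambda<1/2$, where it behaves like $\eps^{(2\lambda-1)/2}\to\infty$. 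This is not a harmless restriction here: the paper only assumes $0<\lambda<k+1$ and its experiments use $\lambda=0.005$. The repair is immediate and stays within your argument: for $\lambda<3/2$ one gets $\norm{u''}\le C\eps^{(2\lambda-3)/4}$, hence $\eps\norm{u''}\le C\eps^{(2\lambda+1)/4}\le C$ for every $\lambda>0$ (the cases $\lambda\ge 3/2$ being trivial), so $\eps\snorm{u-u_I}_1^2\le\norm{u-u_I}\,\eps\norm{u''}\le C\norm{u-u_I}$ holds without any restriction on $\lambda$. With that exponent corrected, your proof is complete and indeed uses only the nodal-interpolation structure, so it is valid on arbitrary meshes as the lemma requires.
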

	\begin{proof}
		See \cite[Lemma~5.1]{SS94}.
	\end{proof}
	
	\begin{lemma}[similar to {\cite[Lemma~5.2]{SS94}}]
		\label{le:energyuIuNToL2uILiseikin}
		Let $u$ be the solution of~\eqref{prob:intLayer} and $u_N \in V^N$ $(k=1)$ the solution
		of~\eqref{dprob:intLayer} on an arbitrary mesh. Then we have
		\begin{gather*}
			\tnorm{u_I - u_N}_\eps \leq C \norm{u-u_I}^{1/2}.
		\end{gather*}
	\end{lemma}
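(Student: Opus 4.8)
The plan is to mimic the proof of Lemma~\ref{le:energyuIuN_Pk}, but now exploiting the special structure available for linear elements: by coercivity and Galerkin orthogonality we have $C\tnorm{u_I-u_N}_\eps^2 \leq B_\eps(u_I-u_N,u_I-u_N) = B_\eps(u_I-u,u_I-u_N)$. The key idea, borrowed from Sun and Stynes, is to avoid the inverse inequality in the convective term by integrating by parts in a more favourable way and instead estimating $B_\eps(u_I-u,u_I-u_N)$ in terms of $\tnorm{u-u_I}_\eps$, the quantity $\int_{-1}^1 (x(u-u_I)')^2\,dx$, and $\tnorm{u_I-u_N}_\eps$. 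Concretely, I would write $B_\eps(u_I-u,u_I-u_N) = \eps\bigl((u_I-u)',(u_I-u_N)'\bigr) + \bigl(a(u_I-u)',u_I-u_N\bigr) + \bigl(c(u_I-u),u_I-u_N\bigr)$ and handle the first and third terms directly by Cauchy--Schwarz, bounding them by $C\tnorm{u-u_I}_\eps\tnorm{u_I-u_N}_\eps$. For the convective term, since $a(x) = -x\,b(x)$ with $b$ smooth, I would write $a(u_I-u)' = -b(x)\cdot x(u_I-u)'$ and bound $\bigl|\bigl(a(u_I-u)',u_I-u_N\bigr)\bigr| \leq \norm{b}_\infty \norm{x(u_I-u)'}\,\norm{u_I-u_N}$, so this term is controlled by $C\bigl(\int_{-1}^1(x(u-u_I)')^2\,dx\bigr)^{1/2}\tnorm{u_I-u_N}_\eps$.

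Combining these estimates gives
\begin{gather*}
	C\tnorm{u_I-u_N}_\eps^2 \leq C\Bigl(\tnorm{u-u_I}_\eps + \norm{x(u-u_I)'}\Bigr)\tnorm{u_I-u_N}_\eps,
\end{gather*}
hence $\tnorm{u_I-u_N}_\eps \leq C\bigl(\tnorm{u-u_I}_\eps + \norm{x(u-u_I)'}\bigr)$. Now I invoke Lemma~\ref{le:energyuIuNToL2uI}: by~\eqref{ieq:energyToL2IntLiseikin} we have $\tnorm{u-u_I}_\eps \leq C\norm{u-u_I}^{1/2}$, and by~\eqref{ieq:x(uMuI)'} we have $\norm{x(u-u_I)'} \leq C\norm{u-u_I}^{1/2}$. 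Substituting both bounds yields $\tnorm{u_I-u_N}_\eps \leq C\norm{u-u_I}^{1/2}$, which is exactly the claim.

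The one point that needs a little care is the convective term: one must make sure the splitting $a(u_I-u)' = -b(x)\cdot x(u_I-u)'$ does not require integration by parts that introduces derivatives of $u_I-u_N$ (which would force an inverse inequality and cost a factor $h_i^{-1}$, as in Lemma~\ref{le:energyuIuN_Pk}). In the formulation above no integration by parts on the convective term is used at all, so no inverse inequality appears and no mesh-dependent factors arise — this is precisely why the argument is cleaner than the higher-order case but also why it does not generalise beyond $k=1$ (for $k\geq 2$ the quantity $\norm{x(u-u_I)'}$ is no longer $O(\norm{u-u_I}^{1/2})$ in a way that feeds back usefully). Everything else is routine Cauchy--Schwarz and an application of coercivity of $B_\eps$, so the only genuine input is Lemma~\ref{le:energyuIuNToL2uI}, which is quoted from~\cite{SS94}.
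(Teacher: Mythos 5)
Your proof is correct and follows essentially the same route as the paper: coercivity plus Galerkin orthogonality, and the critical convective term handled exactly as in the paper, namely without integration by parts, writing $a(x)=-xb(x)$ and bounding $\left|\left(a(u_I-u)',u_I-u_N\right)\right|\leq C\norm{x(u_I-u)'}\,\norm{u_I-u_N}$, which is then controlled by~\eqref{ieq:x(uMuI)'}. The only deviation is the diffusion term: the paper integrates it by parts so that it vanishes identically (the boundary terms drop because $u-u_I$ is zero at the mesh nodes and $(u_I-u_N)''=0$ elementwise for $k=1$), whereas you keep it, bound it by $\tnorm{u-u_I}_\eps\tnorm{u_I-u_N}_\eps$, and then convert via~\eqref{ieq:energyToL2IntLiseikin}; both variants are valid, yours simply invokes one additional inequality from the same quoted Lemma~\ref{le:energyuIuNToL2uI}, while the paper instead uses $\norm{u_I-u}\leq C$ (from $\norm{u_I}_\infty\leq\norm{u}_\infty\leq C$) to write $\norm{u_I-u}\leq C\norm{u_I-u}^{1/2}$ for the reaction term.
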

	\begin{proof}
		As in the proof of Lemma~\ref{le:energyuIuN_Pk} the coercivity of $B_\eps(\cdot,\cdot)$
		and orthogonality yield
		\begin{gather*}
			C \tnorm{u_I - u_N}_\eps^2 \leq B_\eps\!\left(u_I - u_N, u_I -u_N\right)
				= B_\eps\!\left(u_I - u, u_I -u_N\right)\!.
		\end{gather*}
		Integrating by parts and applying the Cauchy-Schwarz inequality, we have
		\begin{align*}
			&\left|B_\eps\!\left(u_I - u, u_I -u_N\right)\right| \\
			& \qquad \leq | \eps ( u_I - u, \underbrace{(u_I - u_N)''}_{=0} ) |
					+ \left| \left(a (u_I - u)', u_I - u_N \right) \right|
					+ \left| \left(c (u_I - u), u_I - u_N \right) \right| \\
			& \qquad \leq C \norm{x (u_I - u)'} \norm{u_I - u_N}
					+ C \norm{u_I - u} \norm{u_I - u_N} \\
			& \qquad \leq C \norm{u_I - u}^{1/2} \norm{u_I - u_N}\!,
		\end{align*}
		where we used~\eqref{ieq:x(uMuI)'} and $\norm{u_I}_\infty \leq \norm{u}_\infty \leq C$.
	\end{proof}
	
	The next lemma provides an auxiliary inequality that will be needed later. We defer its
	proof to Appendix~\ref{app:proofNorm}.
	\begin{lemma}
		\label{le:normEquiVN}
		Let $e \in V^N$ $(k=1)$ on an arbitrary mesh and $-N\leq L < R \leq N$. Then
		\begin{gather*}
			\sum_{i=L+1}^{R-1} \hbar_i |e_i| + \frac{1}{2} \left(h_{L+1} |e_L| + h_{R}|e_R| \right)
				\leq C \norm{e}_{(x_L,x_R)}\!,
		\end{gather*}
		where $e_i = e(x_i)$.
	\end{lemma}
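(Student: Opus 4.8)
The plan is to exploit that $k=1$, i.e.\ that $e$ restricted to each mesh cell is a linear polynomial, so the cell integrals $\int_{x_{i-1}}^{x_i} e^2\,dx$ can be evaluated \emph{exactly} in terms of the nodal values. This turns $\norm{e}_{(x_L,x_R)}^2$ into a weighted $\ell^2$-sum of the $|e_i|$, whereas the left-hand side of the claim is a weighted $\ell^1$-sum in the same nodal values. A single Cauchy--Schwarz step then links the two, and the point of the argument is that the relevant weight totals are bounded by the length of the domain, hence independent of $N$.

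\textbf{Step 1 (elementwise quadrature).} For a linear function on $(x_{i-1},x_i)$ one has the exact identity $\int_{x_{i-1}}^{x_i} e^2\,dx = \frac{h_i}{3}\bigl(e_{i-1}^2 + e_{i-1}e_i + e_i^2\bigr)$. Using the elementary identity $a^2+ab+b^2 = \tfrac12(a^2+b^2) + \tfrac12(a+b)^2 \geq \tfrac12(a^2+b^2)$ this gives $\int_{x_{i-1}}^{x_i} e^2\,dx \geq \frac{h_i}{6}\bigl(e_{i-1}^2 + e_i^2\bigr)$.

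\textbf{Step 2 (summation and regrouping by nodes).} Summing this over $i = L+1,\ldots,R$ and collecting the contribution of each node, the interior node $x_j$ (for $L<j<R$) receives total weight $\frac{h_j+h_{j+1}}{6} = \frac{\hbar_j}{3}$, while the endpoints $x_L$ and $x_R$ receive weights $\frac{h_{L+1}}{6}$ and $\frac{h_R}{6}$. Thus, setting $w_L := \tfrac12 h_{L+1}$, $w_j := \hbar_j$ for $L<j<R$, and $w_R := \tfrac12 h_R$, we obtain $\norm{e}_{(x_L,x_R)}^2 \geq \tfrac13 \sum_{j=L}^{R} w_j e_j^2$.

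\textbf{Step 3 (Cauchy--Schwarz and a telescoping identity).} The left-hand side of the claimed inequality is exactly $\sum_{j=L}^{R} w_j |e_j|$, so Cauchy--Schwarz gives $\sum_j w_j|e_j| \leq \bigl(\sum_j w_j\bigr)^{1/2}\bigl(\sum_j w_j e_j^2\bigr)^{1/2}$. A short telescoping computation shows $\sum_{j=L}^{R} w_j = \sum_{i=L+1}^{R} h_i = x_R - x_L \leq 2$. Combining with Step 2 yields $\sum_j w_j|e_j| \leq \sqrt 2\,\bigl(3\norm{e}_{(x_L,x_R)}^2\bigr)^{1/2} = \sqrt 6\,\norm{e}_{(x_L,x_R)}$, which is the assertion with $C = \sqrt 6$. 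No properties of the Liseikin mesh enter, consistently with the statement that the bound holds on an arbitrary mesh. The only delicate point is the Cauchy--Schwarz step, which a priori might cost a factor growing with the number of nodes; it does not, precisely because the \emph{same} weights $w_j$ appear on both sides and their total $\sum_j w_j$ telescopes to $x_R-x_L\le 2$.
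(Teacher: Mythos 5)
Your proof is correct and follows essentially the same route as the paper: exact elementwise integration of the linear function giving $\int_{x_{i-1}}^{x_i} e^2\,dx \geq \tfrac{h_i}{6}(e_{i-1}^2+e_i^2)$, followed by a single Cauchy--Schwarz step whose weight total telescopes to $x_R-x_L\leq 2$. The only cosmetic difference is that you regroup the quadratic sum by nodes before applying Cauchy--Schwarz, whereas the paper applies it directly to the sum over intervals; both yield the same constant.
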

	\begin{proof}
		See Appendix~\ref{app:proofNorm}.
	\end{proof}
	
	\begin{lemma}[Supercloseness]
		\label{le:supercloseLiseikin}
		Let $u$ be the solution of~\eqref{prob:intLayer} and $u_N \in V^N$ $(k=1)$ the solution
		of~\eqref{dprob:intLayer} on a mesh generated by~\eqref{eq:liseikinMeshGenerator} with
		$0 < \alpha \leq \min\{\lambda/2,1/4\}$. Then we have
		\begin{gather*}
			\tnorm{u_I - u_N}_\eps \leq C N^{-2}.
		\end{gather*}
	\end{lemma}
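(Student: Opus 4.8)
The starting point is the general estimate of Lemma~\ref{le:energyuIuNToL2uILiseikin}, which reduces the task to controlling $\norm{u-u_I}^{1/2}$, together with the $L^2$-interpolation bound~\eqref{ieq:L2IntLiseikin} from Lemma~\ref{le:IntErrorLiseikin}. However, for $k=1$ the assumption in Lemma~\ref{le:IntErrorLiseikin} would require $0<\alpha\le\min\{\lambda/2,1/4\}$, which is exactly the hypothesis here, so~\eqref{ieq:L2IntLiseikin} gives $\norm{u-u_I}\le C N^{-2}$. Plugging this into Lemma~\ref{le:energyuIuNToL2uILiseikin} only yields $\tnorm{u_I-u_N}_\eps\le C N^{-1}$, which is a factor $N$ short of the claim. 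Hence the naive route fails, and the supercloseness must come from a sharper, term-by-term analysis of the bilinear form rather than from the crude Cauchy--Schwarz step used in Lemma~\ref{le:energyuIuNToL2uILiseikin}.

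The plan is therefore to go back to the identity $C\tnorm{u_I-u_N}_\eps^2 \le B_\eps(u_I-u,u_I-u_N)$ (coercivity plus Galerkin orthogonality), integrate by parts as before to rewrite it as $\eps((u_I-u)',(u_I-u_N)') - (a(u_I-u),(u_I-u_N)') + ((c-a')(u_I-u),u_I-u_N)$, and then exploit the special structure available for linear elements. On each mesh interval $(x_{i-1},x_i)$, the test derivative $(u_I-u_N)'$ is a \emph{constant}, call it $e_i'$; this is what allowed Lemma~\ref{le:normEquiVN} to be formulated, and it is the key to squeezing out the extra power of $N$. For the convection term one writes $(a(u_I-u),(u_I-u_N)')_{(x_{i-1},x_i)} = e_i' \int_{x_{i-1}}^{x_i} a(x)(u_I-u)(x)\,dx$ and estimates the integral using the fact that $u-u_I$ is orthogonal (in the $L^2$ sense) to constants on each interval when one uses the midpoint-type interpolation, or by Taylor-expanding $a$ about a nodal point so that the leading contribution is an integral of $(u_I-u)$ against a \emph{linear} weight, gaining an extra $h_i$. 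Summing $|e_i'|$ against these interval contributions and invoking Lemma~\ref{le:normEquiVN} (and the inverse inequality $|e_i'|\le C h_i^{-1}\norm{u_I-u_N}_{(x_{i-1},x_i)}$) converts the bound into $C(\text{something }\le N^{-2})\cdot\tnorm{u_I-u_N}_\eps$. The reaction term $((c-a')(u_I-u),u_I-u_N)$ and the $\eps$-term are lower order: for the former one uses $\norm{u-u_I}\le CN^{-2}$ directly, and for the $\eps$-weighted term one uses $\eps^{1/2}\snorm{u-u_I}_1\le CN^{-1}$ together with the observation that $\eps^{1/2}\snorm{u_I-u_N}_1\le\tnorm{u_I-u_N}_\eps$, but here too a gain is needed, so one likely integrates the $\eps$-term by parts once more (legitimate since $(u_I-u_N)''=0$ on each interval) turning it into jump terms at the nodes, which are then controlled via Lemma~\ref{le:normEquiVN}.

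The main obstacle I expect is the convection term near the turning point, i.e.\ the interval $(x_0,x_1)$ where the layer is strongest and where the mesh is finest. There the weight $a(x)=-x b(x)$ and the factor $x$ help, but one must carefully combine: (i) the case split $\eps\gtrless h^{2/\alpha}$ exactly as in Lemmas~\ref{le:IntErrorLiseikin} and~\ref{le:sumIntLiseikin}; (ii) the bounds of Lemma~\ref{le:liseikinh^kBounds} and Lemma~\ref{le:liseikinhBounds} (the latter, giving $(h_i-h_{i-1})(x_i+\eps^{1/2})^{1-2\alpha}\le Ch^2$, is the reason the hypothesis forces $\alpha\le 1/4$, matching the $k+1=2$ case of Lemma~\ref{le:sumIntLiseikin} but with an extra factor that must be absorbed); and (iii) a quadrature/orthogonality argument showing that integrating $a\cdot(u_I-u)$ against a constant produces an $O(h_i^2 \cdot \text{derivative bound})$ contribution rather than $O(h_i)$. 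Getting the powers to line up so that the summed contribution over $i=1,\dots,N$ is $O(N^{-2})$ uniformly in $\eps$—and not merely $O(N^{-2}\log)$—is the delicate point; I would handle it by the same telescoping estimate $\sum_i h_i \le 1$ used at the end of Lemma~\ref{le:sumIntLiseikin}, after reducing every interval contribution to the form $C h_i N^{-4}$ (for $i\ge2$) plus a single $CN^{-4}$ term from $(x_0,x_1)$.
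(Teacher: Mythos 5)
Your overall strategy points in the right direction (the naive combination of Lemma~\ref{le:energyuIuNToL2uILiseikin} with \eqref{ieq:L2IntLiseikin} indeed only gives $N^{-1}$, the $\eps$-term and the reaction term are harmless, the case split $\eps \gtrless h^{2/\alpha}$ and Lemmas~\ref{le:liseikinh^kBounds}, \ref{le:liseikinhBounds}, \ref{le:normEquiVN} are exactly the right tools), but the mechanism you propose for gaining the extra power of $N$ in the convection term does not work, and the actual key device of the paper is missing. First, the nodal (Lagrange) interpolant does \emph{not} satisfy $\int_{x_{i-1}}^{x_i}(u-u_I)\,dx=0$; that orthogonality holds for the $L^2$-projection, not for $u_I$, so your first route collapses. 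Second, Taylor-expanding $a$ about $x_{i-1}$ only improves the contribution of $a(x)-a(x_{i-1})$; the dominant piece $a(x_{i-1})\,\frac{e_i-e_{i-1}}{h_i}\int_{x_{i-1}}^{x_i}(u-u_I)\,dx$ remains, and a per-interval estimate of it (with $|u-u_I|\le Ch_i^2|u''|$, $|a(x_{i-1})|\le Cx_i$, and either $|e_i-e_{i-1}|\le|e_i|+|e_{i-1}|$ with Lemma~\ref{le:normEquiVN}, or the inverse inequality and Cauchy--Schwarz as in Lemma~\ref{le:energyuIuN_Pk}) yields only $O(N^{-1})\norm{u_I-u_N}$, since $h_i(x_{i-1}+\eps^{1/2})^{\lambda-1}$ is of size $N^{-1}$, not $N^{-2}$. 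Also note, as a minor point, that the $\eps$-term does not produce jump terms to be controlled: it vanishes identically after elementwise integration by parts because $u-u_I$ is zero at every node and $(u_I-u_N)''=0$ on each element.

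What the paper actually does, and what your proposal lacks, is a discrete summation by parts in the node index: writing $(a(u-u_I),(u_I-u_N)')_{(x_1,1)}=\sum_i \frac{e_i-e_{i-1}}{h_i}\int_{x_{i-1}}^{x_i}a(u-u_I)$ and performing Abel summation so that the nodal values $e_i$ multiply \emph{differences of adjacent interval averages}
\begin{gather*}
	\Bigl( \tfrac{1}{h_i}\textstyle\int_{x_{i-1}}^{x_i} - \tfrac{1}{h_{i+1}}\int_{x_i}^{x_{i+1}} \Bigr)(u-u_I)(x)\,dx .
\end{gather*}
It is in this difference that the extra order is gained: a Taylor representation of the interpolation error shows it is bounded by $C\hbar_i\bigl(h_i^2\max|u'''| + (h_{i+1}-h_i)\max|u''|\bigr)$, and the mesh-difference bound of Lemma~\ref{le:liseikinhBounds} together with Lemma~\ref{le:liseikinh^kBounds} turns this into $C\hbar_i N^{-2}$ even with the weight $a(x_{i-1})$; Lemma~\ref{le:normEquiVN} then converts $\sum_i\hbar_i|e_i|$ into $C\norm{u_I-u_N}$. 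The remaining pieces (the $a(x)-a(x_{i-1})$ terms, the first cell $(0,x_1)$ with the case split and, when $\eps\le h^{2/\alpha}$, the bound $\int_0^{x_1}(x(u_I-u)')^2dx\le Cx_1\le CN^{-4}$) are handled essentially as you sketch. Without the summation-by-parts/adjacent-cancellation step, however, the central estimate of the proof is not reached, so the proposal as it stands has a genuine gap.
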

	\begin{proof}
		Following the argument of Lemma~\ref{le:energyuIuNToL2uILiseikin} we have
		\begin{gather}
			\label{estLemma:first}
			C \tnorm{u_I-u_N}_\eps^2 \leq \left| \left( a (u_I-u)',u_I-u_N \right) \right|
				+ C \norm{u_I-u} \norm{u_I-u_N}\!.
		\end{gather}
		
		Integrating by parts and applying the Cauchy-Schwarz inequality yield for $j \in \{0,1\}$
		\begin{gather}
			\label{estLemma:axi1}
			\left| \left( a (u_I-u)',u_I-u_N \right)_{(x_j,1)} \right|
				\leq \left| \left( a (u_I-u),(u_I-u_N)' \right)_{(x_j,1)} \right|
					+ C \norm{u_I-u}_{(x_j,1)} \norm{u_I-u_N}_{(x_j,1)}\!.
		\end{gather}
		Set $e_i = \left( u_I - u_N \right)(x_i)$ for $i = -N,\ldots,N$. Then we have
		\begin{gather}
			\label{estLemma:y1y2y3}
			\begin{aligned}
				& \left(a(u-u_I),(u_I-u_N)'\right)_{(x_1,1)} \\
				& \qquad = \sum_{i=2}^N \frac{e_i-e_{i-1}}{h_i} \int_{x_{i-1}}^{x_i} a(x) (u-u_I)(x) dx \\
				& \qquad = \sum_{i=2}^{N-1} e_i a(x_{i-1})
					\left\{\left( \frac{1}{h_i} \int_{x_{i-1}}^{x_i} - \frac{1}{h_{i+1}} \int_{x_i}^{x_{i+1}} \right)
						(u-u_I)(x)dx \right\} \\
				& \qquad \qquad + \sum_{i=2}^{N-1} e_i
					\left\{\left( \frac{1}{h_i} \int_{x_{i-1}}^{x_i} - \frac{1}{h_{i+1}} \int_{x_i}^{x_{i+1}} \right)
						\left(a(x)-a(x_{i-1})\right)(u-u_I)(x)dx \right\} \\
				& \qquad \qquad - e_1 \frac{1}{h_{2}} \int_{x_1}^{x_2} \left(a(x)-a(x_0)\right)(u-u_I)(x)dx \\
				& \qquad = Y_1 + Y_2 + Y_3. 
			\end{aligned}
		\end{gather}
		
		Inspecting the proof of Lemma~\ref{le:IntErrorLiseikin}, we see that
		\begin{gather*}
			\left|(u-u_I)(x)\right| \leq C N^{-2} \quad
			\begin{cases}
				\text{ if } x_1 \leq x \leq 1, \\
				\text{ if } x_1 > x \geq 0, \quad \eps \geq h^{2/\alpha}.
			\end{cases}
		\end{gather*}
		Consequently, we have for $i = 2,\ldots, N-1$ or if $i=1$ and $\eps \geq h^{2/\alpha}$
		\begin{gather*}
			\frac{1}{h_i} \int_{x_{i-1}}^{x_i} \left(a(x)-a(x_{i-1})\right) (u-u_I)(x) dx
				\leq C \hbar_i \norm{u-u_I}_{\infty,(x_{i-1},x_i)}
				\leq C \hbar_i N^{-2},
		\end{gather*}
		and analogously for $i = 1,\ldots,N-1$
		\begin{gather*}
			\frac{1}{h_{i+1}} \int_{x_i}^{x_{i+1}} \left(a(x)-a(x_{i-1})\right) (u-u_I)(x) dx
				\leq C \hbar_i \norm{u-u_I}_{\infty,(x_i,x_{i+1})}
				\leq C \hbar_i N^{-2}.
		\end{gather*}
		Hence, recalling Lemma~\ref{le:normEquiVN}, we conclude
		\begin{gather}
			\label{estLemma:y2y3}
			\left| Y_2 \right| + \left|Y_3\right|
				\leq C N^{-2} \sum_{i=1}^{N-1} e_i \hbar_i
				\leq C N^{-2} \norm{u_I-u_N}_{(0,1)}
		\end{gather}
		and if $\eps \geq h^{2/\alpha}$ (note that $x_1 = h_1$)
		\begin{gather}
			\label{estLemma:a0x1}
			\begin{aligned}
				& \left| (a(u_I-u),(u_I-u_N)')_{(0,x_1)} \right| 
					= \left| \frac{e_1-e_0}{h_1} \int_0^{x_1} a(x) (u-u_I)(x) dx \, \right| \\
				& \qquad \qquad \leq C \frac{|e_1| + |e_0|}{h_1} \norm{u-u_I}_{\infty,(0,x_1)} \int_0^{x_1} x \, dx 
					\leq C N^{-2} \norm{u_I-u_N}_{(0,x_1)}\!.
			\end{aligned}
		\end{gather}
		
		Next, we bound $\left|Y_1\right|$. By an integral transformation, standard interpolation
		error estimates, the mean value theorem, and~\eqref{ieq:innerLayerBounds}, we obtain
		\begin{align*}
			& \left| \left( \frac{1}{h_i}\int_{x_{i-1}}^{x_i} - \frac{1}{h_{i+1}}\int_{x_i}^{x_{i+1}} \right) (u-u_I)(x) \, dx \, \right| \\
			& \qquad = \left| \int_0^1 (u-u_I)(x_{i-1}+th_i) - (u-u_I)(x_i+th_{i+1}) \, dt \, \right| \\
			& \qquad = \frac{1}{2}\left| \int_0^1 (1-t)t \left[ h_i^2 u''(\xi_{i-1}(t)) - h_{i+1}^2 u''(\xi_{i+1}(t))\right] dt \, \right| \\
			& \qquad = \frac{1}{2}\left| \int_0^1 (1-t)t \left[ h_i^2 \left( u''(\xi_{i-1}(t)) - u''(\xi_{i+1}(t))\right)
				+ \left(h_i^2 - h_{i+1}^2\right) u''(\xi_{i+1}(t)) \right] dt \, \right| \\
			& \qquad \leq \frac{1}{2}\left( 2 \hbar_i h_i^2 \max_{x_{i-1}\leq \xi \leq x_{i+1}} \left| u'''(\xi) \right|
				+ 2 \hbar_i \left(h_{i+1} - h_i \right) \max_{x_i \leq \xi \leq x_{i+1}} \left| u''(\xi) \right| \right)
				\int_0^1 (1-t)t \, dt \\
			& \qquad = \frac{1}{6} \hbar_i \left(h_i^2 \max_{x_{i-1}\leq \xi \leq x_{i+1}} \left| u'''(\xi) \right|
				+ \left(h_{i+1} - h_i \right) \max_{x_i \leq \xi \leq x_{i+1}} \left| u''(\xi) \right| \right) \\
			& \qquad \leq C \hbar_i \left( h_i^2 + \left(h_{i+1} - h_i \right)
				+ h_i^2 \left( x_{i-1} + \eps^{1/2} \right)^{\lambda - 3}
				+ \left(h_{i+1} - h_i \right) \left( x_i + \eps^{1/2} \right)^{\lambda - 2} \right)\!,
		\end{align*}
		where $x_{i-1} < \xi_{i-1}(t) < x_i$ and $x_i < \xi_{i+1}(t) < x_{i+1}$.
		Combining this with the estimates of Lemma~\ref{le:liseikinh^kBounds}
		and Lemma~\ref{le:liseikinhBounds} yields together with Lemma~\ref{le:normEquiVN}
		\begin{gather}
			\label{estLemma:y1}
			\begin{aligned}
				\left| Y_1 \right|
					& \leq C \sum_{i=2}^{N-1} e_i x_{i-1}
						\left\{ \left| \left( \frac{1}{h_i} \int_{x_{i-1}}^{x_i} - \frac{1}{h_{i+1}} \int_{x_i}^{x_{i+1}} \right)
							(u-u_I)(x)dx \, \right| \right\} \\
					& \leq C N^{-2} \sum_{i=2}^{N-1} e_i x_{i-1} \hbar_i
						\left( 1 + \left( x_{i-1} + \eps^{1/2} \right)^{-1} + \left( x_i + \eps^{1/2} \right)^{-1} \right) \\
					& \leq C N^{-2} \sum_{i=2}^{N-1} e_i \hbar_i \leq C N^{-2} \norm{u_I - u_N}_{(x_1,1)}\!.
			\end{aligned}
		\end{gather}
		
		Altogether~\eqref{estLemma:axi1} -- \eqref{estLemma:y1},
		and~\eqref{ieq:L2IntLiseikin} give
		\begin{gather*}
			\left| \left( a (u_I-u)',u_I-u_N \right)_{(x_j,1)} \right|
				\leq C N^{-2} \left(\norm{u_I - u_N}_{(x_j,1)} + \norm{u_I - u_N}_{(0,1)}\right)
		\end{gather*}
		for $j=1$ or $j=0$ if $\eps \geq h^{2/\alpha}$. Because of symmetry, it remains
		to bound $\left(a(u_I-u)',u_I-u_N\right)_{(0,x_1)}$ if $\eps \leq h^{2/\alpha}$.
		Applying the Cauchy-Schwarz inequality gives
		\begin{gather*}
			\left|\left(a(u_I-u)',u_I-u_N\right)_{(0,x_1)} \right|
				\leq C \left(\int_0^{x_1} \left(x(u_I-u)'\right)^2 dx\right)^{1/2} \norm{u_I-u_N}_{(0,x_1)}\!.
		\end{gather*}
		Using integration by parts twice, \eqref{ieq:innerLayerBounds}, and $\norm{u_I}_\infty \leq \norm{u}_\infty \leq C$, we obtain
		\begin{align*}
			\int_0^{x_1} \left(x(u_I-u)'\right)^2 dx
				& = - \int_0^{x_1} 2x \left(u_I - u\right)' \left( u_I - u \right) dx
					- \int_0^{x_1} x^2 u'' \left(u_I - u\right) dx \\
				& = \int_0^{x_1} 2x \left(u_I -u \right) \left( u_I-u\right)' dx
					+ \int_0^{x_1} 2 \left(u_I-u\right)^2 dx
					- \int_0^{x_1} x^2 u'' \left(u_I - u \right) dx \\
				& = \int_0^{x_1} \left(u_I-u\right)^2 dx
					- \int_0^{x_1} x^2 u'' \left(u_I - u \right) dx \\
				& \leq \int_0^{x_1} \left((u-u_I)^2(x) + C\left|(u-u_I)(x)\right| \right) dx \\
				& \leq C x_1 \leq C N^{-4}
		\end{align*}
		by Lemma~\ref{le:liseikinh^kBounds}.
		
		Combining~\eqref{estLemma:first}, the last three estimates, and~\eqref{ieq:L2IntLiseikin}
		completes the proof.
	\end{proof}
	
	We now prove the $\eps$-uniform error estimate in the energy and $L^2$-norm.
	\begin{theorem}
		\label{th:L2ErrorLiseikin}
		Let $u$ be the solution of~\eqref{prob:intLayer} and $u_N \in V^N$ $(k=1)$ the solution
		of~\eqref{dprob:intLayer} on a mesh generated by~\eqref{eq:liseikinMeshGenerator} with
		$0 < \alpha \leq \min\{\lambda/2,1/4\}$. Then we have
		\begin{gather*}
			\tnorm{u-u_N}_\eps \leq C N^{-1}
		\end{gather*}
		and
		\begin{gather*}
			\norm{u-u_N} \leq C N^{-2}.
		\end{gather*}
	\end{theorem}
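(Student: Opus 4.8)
The plan is to derive both bounds essentially for free from the supercloseness result in Lemma~\ref{le:supercloseLiseikin} together with the interpolation estimates in Lemma~\ref{le:IntErrorLiseikin} for the case $k=1$. First I would observe that the hypothesis $0 < \alpha \leq \min\{\lambda/2, 1/4\}$ is exactly the one required by Lemma~\ref{le:supercloseLiseikin}, and that it implies $0 < \alpha \leq \min\{\lambda/2, 1/4\} = \min\{\lambda/(k+1), 1/(2(k+1))\}$ for $k=1$, so Lemma~\ref{le:IntErrorLiseikin} applies as well, giving $\norm{u-u_I} \leq C N^{-2}$ and $\tnorm{u-u_I}_\eps \leq C N^{-1}$.

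For the energy-norm estimate I would simply write $\tnorm{u-u_N}_\eps \leq \tnorm{u-u_I}_\eps + \tnorm{u_I-u_N}_\eps$ by the triangle inequality, then bound the first term by $C N^{-1}$ using \eqref{ieq:energyIntLiseikin} and the second by $C N^{-2} \leq C N^{-1}$ using Lemma~\ref{le:supercloseLiseikin}; this yields $\tnorm{u-u_N}_\eps \leq C N^{-1}$. For the $L^2$-norm estimate I would again use the triangle inequality $\norm{u-u_N} \leq \norm{u-u_I} + \norm{u_I-u_N}$. The first term is $\leq C N^{-2}$ by \eqref{ieq:L2IntLiseikin}. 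For the second term I would use that $\norm{\cdot} \leq \tnorm{\cdot}_\eps$ (immediate from the definition of the energy norm, since $\eps \snorm{v}_1^2 \geq 0$), so $\norm{u_I-u_N} \leq \tnorm{u_I-u_N}_\eps \leq C N^{-2}$ by Lemma~\ref{le:supercloseLiseikin}. Adding the two contributions gives $\norm{u-u_N} \leq C N^{-2}$.

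There is essentially no obstacle here: the theorem is a direct corollary, and all the real work has already been done in the supercloseness lemma (whose proof handles the delicate estimates of $Y_1, Y_2, Y_3$, the treatment of the first mesh interval in the two regimes $\eps \gtrless h^{2/\alpha}$, and the integration-by-parts manipulations of $\int_0^{x_1}(x(u_I-u)')^2\,dx$) and in the interpolation lemma (which exploits the mesh-point bounds of Lemma~\ref{le:liseikinh^kBounds} and Lemma~\ref{le:liseikinhBounds}). The only thing to be careful about is that the constraint on $\alpha$ in the statement coincides with the one needed by both ingredient lemmas for $k=1$, which it does. So the proof is a two-line application of the triangle inequality in each norm, invoking Lemma~\ref{le:supercloseLiseikin}, Lemma~\ref{le:IntErrorLiseikin}, and the trivial inequality $\norm{\cdot} \leq \tnorm{\cdot}_\eps$.
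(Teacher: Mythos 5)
Your proposal is correct and takes essentially the same route as the paper: the $L^2$ bound is obtained exactly as in the paper's proof (triangle inequality, \eqref{ieq:L2IntLiseikin}, Lemma~\ref{le:supercloseLiseikin}, and $\norm{\cdot}\leq\tnorm{\cdot}_\eps$), and the $\alpha$-restriction indeed coincides with what both ingredient lemmas need for $k=1$. For the energy bound the paper simply cites Theorem~\ref{th:energyErrorLiseikin} (or the linear-element route via Lemma~\ref{le:energyuIuNToL2uILiseikin} and \eqref{ieq:energyToL2IntLiseikin}), whereas you combine \eqref{ieq:energyIntLiseikin} with the supercloseness estimate --- an equally valid assembly of the same already-proved results.
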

	\begin{proof}
		The bound in the energy norm is already given in Theorem~\ref{th:energyErrorLiseikin}, but
		also follows easily from the triangle inequality, Lemma~\ref{le:energyuIuNToL2uILiseikin},
		\eqref{ieq:energyToL2IntLiseikin}, and~\eqref{ieq:L2IntLiseikin}. To prove the bound in the
		$L^2$-norm only the supercloseness result of Lemma~\ref{le:supercloseLiseikin}
		and~\eqref{ieq:L2IntLiseikin} have to be used.
	\end{proof}
	
	\begin{remark}
		For linear elements in~\cite[Theorem~5.1]{SS94} the presumably non-optimal $L^2$-norm
		estimate
		\begin{gather*}
			\norm{u-u_N} \leq C\left(N^{-1}\ln N\right)^{3/2}
		\end{gather*}
		is proven for a discrete solution calculated on a piecewise-equidistant mesh. The
		argumentation there is similar to the proof of Lemma~\ref{le:supercloseLiseikin}.
		But, in one of the occurring terms there are problems when $h_i \neq h_{i+1}$ since
		the difference $|h_i-h_{i+1}|$ is not sufficiently small on the piecewise-equidistant
		mesh. For the graded meshes generated by~\eqref{eq:liseikinMeshGenerator} we have the
		estimates of Lemma~\ref{le:liseikinhBounds}. Thus, these problems can be
		circumvented.
	\end{remark}
	
\section{Numerical experiments}
\label{sec:numExp}
	
	Now, we shall present some numerical results to verify the theoretical
	findings of this paper. Therefore, we study a test problem taken from~\cite{SS94}
	whose solution exhibits typical interior layer behaviour of ``cusp''-type.
	
	All computations where performed using a FEM-code based on $\mathbb{SOFE}$ by Lars
	Ludwig~\cite{SOFE}. In general, the parameter $\alpha$ needed to generate the graded mesh was
	chosen as
	\begin{gather*}
		\alpha = \alpha_0 \min\{\lambda/(k+1), 1/(2(k+1))\}
	\end{gather*}
	with $\alpha_0=1$. Exceptions are explicitly stated. For given errors $E_{\eps,N}$
	we calculate the rates of convergence by $\left(\ln E_{\eps,N}-\ln E_{\eps,2N}\right)/\ln 2$.
	\begin{example}[see~\cite{SS94}]
		\label{test:SunStynes}
		We consider the singularly perturbed turning point problem
		\begin{align*}
			- \eps u'' - x(1+x^2) u' + \lambda(1+x^3)u & = f, \qquad \text{for} \quad x \in (-1,1), \\
			u(-1)=u(1) & = 0,
		\end{align*}
		where the right hand side $f(x)$ is chosen such that the solution $u(x)$ is given by
		\begin{gather*}
			u(x) = \left(x^2+\eps\right)^{\lambda/2} + x\left(x^2+\eps\right)^{(\lambda-1)/2}
				- \left(1+\eps\right)^{\lambda/2}\left(1+x\left(1+\eps\right)^{-1/2}\right).
		\end{gather*}
		Note that the parameter $\lambda$ in the problem coincide with the quantity
		$\bar{\lambda}=c(0)/|a'(0)|$.
	\end{example}
	
	In Figure~\ref{fig:energy} the energy norm error is plotted for finite elements of order
	$k = 1, \ldots, 4$ applied to Example~\ref{test:SunStynes} with
	$\eps = 10^{-8}$ and $\lambda = 0.005$. The expected convergence behaviour,
	cf. Theorem~\ref{th:energyErrorLiseikin}, can be clearly seen. The numerical
	results suggest that the energy norm error is almost independent of $\eps$. Anyway
	it stays stable for small $\eps$, see Table~\ref{tab:energy}.
	
	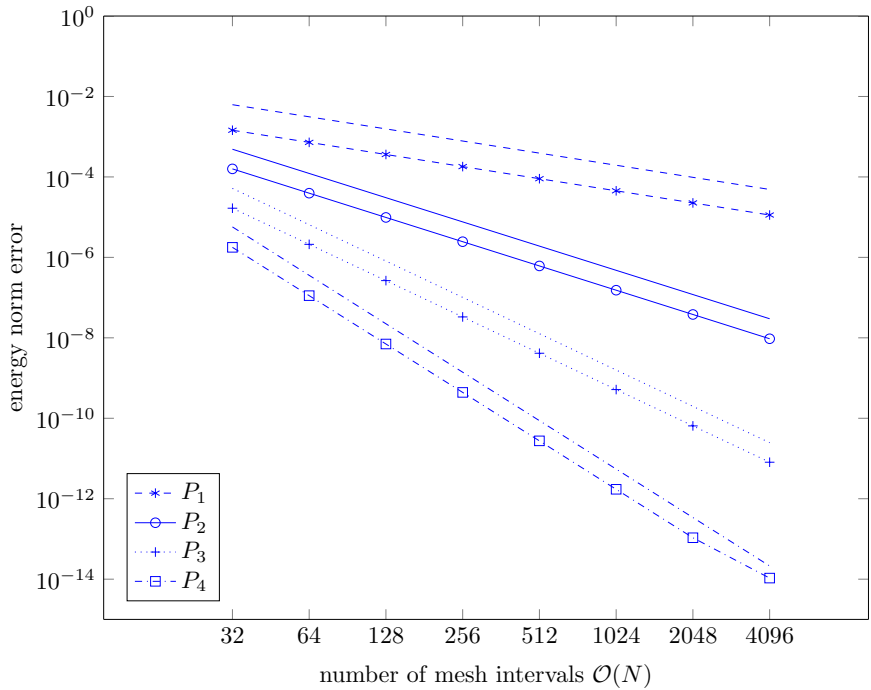
\begin{figure}
		\begin{center}
%
%
%
\begin{tikzpicture}[scale=0.9]

\begin{axis}[%
width=4.4in,
height=3.5in,
at={(0.809in,0.513in)},
scale only axis,
separate axis lines,
every outer x axis line/.append style={black},
every x tick label/.append style={font=\color{black}},
xmode=log,
xmin=10,
xmax=10000,
xtick={  32,   64,  128,  256,  512, 1024, 2048, 4096},
xticklabels={  32,   64,  128,  256,  512, 1024, 2048, 4096},
xminorticks=true,
xlabel={number of mesh intervals $\KAO(N)$},
every outer y axis line/.append style={black},
every y tick label/.append style={font=\color{black}},
ymode=log,
ymin=1e-15,
ymax=1,
yminorticks=true,
ylabel={energy norm error},
axis background/.style={fill=white},
legend style={at={(0.03,0.03)},anchor=south west,legend cell align=left,align=left,draw=black}
]
\addplot [color=blue,dashed,mark=asterisk,mark options={solid}]
  table[row sep=crcr]{%
32	0.00144877361158874\\
64	0.000723347395832683\\
128	0.000361538252807036\\
256	0.000180751741895945\\
512	9.03736839481367e-05\\
1024	4.5186568165753e-05\\
2048	2.25932498433125e-05\\
4096	1.12966206412881e-05\\
};
\addlegendentry{$P_1$};

\addplot [color=blue,solid,mark=o,mark options={solid}]
  table[row sep=crcr]{%
32	0.000158527117878508\\
64	3.9436984428203e-05\\
128	9.8360947882653e-06\\
256	2.45473627405724e-06\\
512	6.12490802330366e-07\\
1024	1.52746537638214e-07\\
2048	3.80683908603278e-08\\
4096	9.4819339994192e-09\\
};
\addlegendentry{$P_2$};

\addplot [color=blue,dotted,mark=+,mark options={solid}]
  table[row sep=crcr]{%
32	1.66900355101386e-05\\
64	2.10908816362078e-06\\
128	2.64554315309594e-07\\
256	3.3096847810157e-08\\
512	4.13794786601101e-09\\
1024	5.17269423841341e-10\\
2048	6.46594803507615e-11\\
4096	8.08246350654861e-12\\
};
\addlegendentry{$P_3$};

\addplot [color=blue,dashdotted,mark=square,mark options={solid}]
  table[row sep=crcr]{%
32	1.77513454075289e-06\\
64	1.11897765271855e-07\\
128	7.02473735988224e-09\\
256	4.39451925317352e-10\\
512	2.74652900840514e-11\\
1024	1.71604482089942e-12\\
2048	1.07215193461717e-13\\
4096	1.06406549502993e-14\\
};
\addlegendentry{$P_4$};

\addplot [color=blue,dashed,forget plot]
  table[row sep=crcr]{%
32	0.00625\\
64	0.003125\\
128	0.0015625\\
256	0.00078125\\
512	0.000390625\\
1024	0.0001953125\\
2048	9.765625e-05\\
4096	4.8828125e-05\\
};
\addplot [color=blue,solid,forget plot]
  table[row sep=crcr]{%
32	0.00048828125\\
64	0.0001220703125\\
128	3.0517578125e-05\\
256	7.62939453125e-06\\
512	1.9073486328125e-06\\
1024	4.76837158203125e-07\\
2048	1.19209289550781e-07\\
4096	2.98023223876953e-08\\
};
\addplot [color=blue,dotted,forget plot]
  table[row sep=crcr]{%
32	5.18798828125e-05\\
64	6.4849853515625e-06\\
128	8.10623168945312e-07\\
256	1.01327896118164e-07\\
512	1.26659870147705e-08\\
1024	1.58324837684631e-09\\
2048	1.97906047105789e-10\\
4096	2.47382558882236e-11\\
};
\addplot [color=blue,dashdotted,forget plot]
  table[row sep=crcr]{%
32	5.7220458984375e-06\\
64	3.57627868652344e-07\\
128	2.23517417907715e-08\\
256	1.39698386192322e-09\\
512	8.73114913702011e-11\\
1024	5.45696821063757e-12\\
2048	3.41060513164848e-13\\
4096	2.1316282072803e-14\\
};
\end{axis}
\end{tikzpicture}%
			\caption{Energy norm error for finite elements of order $k = 1, \ldots, 4$ applied
			to Example~\ref{test:SunStynes} with $\eps = 10^{-8}$ and $\lambda = 0.005$.
			Reference curves of the form $\KAO(N^{-k})$.}
			\label{fig:energy}
		\end{center}
	\end{figure}
	
	\begin{table}
	\begin{center}
	\begin{tabular}{ l *{8}{c} } \toprule
			& \multicolumn{2}{c}{$P_1$-elements} & \multicolumn{2}{c}{$P_2$-elements} &
				\multicolumn{2}{c}{$P_3$-elements} & \multicolumn{2}{c}{$P_4$-elements} \\
		\cmidrule(l{0.7ex}r{0.7ex}){2-3} \cmidrule(l{0.7ex}r{0.7ex}){4-5}
		\cmidrule(l{0.7ex}r{0.7ex}){6-7} \cmidrule(l{0.7ex}r{0.7ex}){8-9}
		$\eps$ \textbf{\textbackslash} $N$ & 512 & 1024 & 512 & 1024 & 512 & 1024 & 512 & 1024 \\ 
		\cmidrule(l{0.7ex}r{0.7ex}){1-1} \cmidrule(l{0.7ex}r{0.7ex}){2-2} \cmidrule(l{0.7ex}r{0.7ex}){3-3}
		\cmidrule(l{0.7ex}r{0.7ex}){4-4} \cmidrule(l{0.7ex}r{0.7ex}){5-5} \cmidrule(l{0.7ex}r{0.7ex}){6-6}
		\cmidrule(l{0.7ex}r{0.7ex}){7-7} \cmidrule(l{0.7ex}r{0.7ex}){8-8} \cmidrule(l{0.7ex}r{0.7ex}){9-9} 
		$1$ & 5.89e-04 & 2.95e-04 & 2.36e-07 & 5.91e-08 & 1.37e-10 & 1.71e-11 & 1.49e-13 & 2.06e-13 \\ 
		$10^{- 2}$ & 7.64e-04 & 3.82e-04 & 1.31e-06 & 3.28e-07 & 2.36e-09 & 2.95e-10 & 4.07e-12 & 3.06e-13 \\ 
		$10^{- 4}$ & 4.61e-04 & 2.30e-04 & 1.52e-06 & 3.81e-07 & 5.28e-09 & 6.60e-10 & 1.75e-11 & 1.10e-12 \\ 
		$10^{- 6}$ & 2.16e-04 & 1.08e-04 & 1.07e-06 & 2.68e-07 & 5.56e-09 & 6.95e-10 & 2.76e-11 & 1.73e-12 \\ 
		$10^{- 8}$ & 9.04e-05 & 4.52e-05 & 6.12e-07 & 1.53e-07 & 4.14e-09 & 5.17e-10 & 2.75e-11 & 1.72e-12 \\ 
		$10^{-10}$ & 3.54e-05 & 1.77e-05 & 3.69e-07 & 9.17e-08 & 2.54e-09 & 3.17e-10 & 2.17e-11 & 1.35e-12 \\ 
		$10^{-12}$ & 1.33e-05 & 6.66e-06 & 3.53e-07 & 8.79e-08 & 1.38e-09 & 1.72e-10 & 1.80e-11 & 1.12e-12 \\ 
		$10^{-14}$ & 4.95e-06 & 2.45e-06 & 4.49e-07 & 1.12e-07 & 6.87e-10 & 8.58e-11 & 2.31e-11 & 1.44e-12 \\ 
		\bottomrule
	\end{tabular}
	\caption{Energy norm error for finite elements of order $k = 1, \ldots, 4$ applied
	to Example~\ref{test:SunStynes} with certain $\eps$ and $\lambda = 0.005$.}
	\label{tab:energy}
	\end{center}
	\end{table}
		
	Furthermore, we study the influence of varying $\lambda$ and $\alpha_0$. Therefore,
	we consider Example~\ref{test:SunStynes} with fixed $\eps = 10^{-8}$ on
	correspondent layer-adapted meshes with $N=1024$. In Figure~\ref{fig:lambda}
	the energy norm error is plotted against $\lambda$ for $\alpha_0 = 1$ (left) and
	against $\alpha_0$ for $\lambda = 0.005$ (right), respectively. In both cases the
	error is almost constant in the studied ranges. Thus, it seems to be plausible
	to presume the method to be robust in $\alpha$.
	
	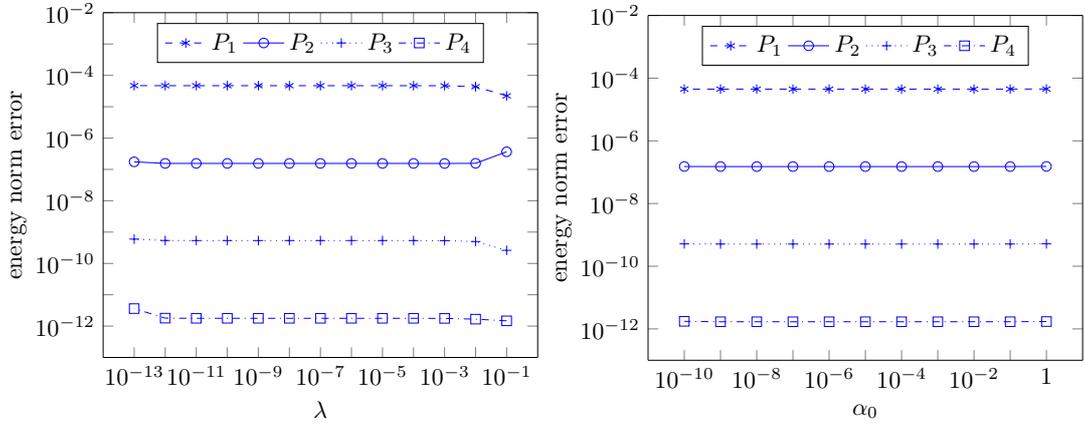
\begin{figure}
		\begin{center}
%
%
%
\begin{tikzpicture}[scale = 0.9]

\begin{axis}[%
width=2.5in,
height=2in,
at={(0.809in,0.513in)},
scale only axis,
separate axis lines,
every outer x axis line/.append style={black},
every x tick label/.append style={font=\color{black}},
xmode=log,
xmin=1e-14,
xmax=1,
xtick={ 1e-13,  1e-12,  1e-11,  1e-10,  1e-09,  1e-08,  1e-07,  1e-06,  1e-05, 0.0001,  0.001,   0.01,    0.1},
xticklabels={ $10^{-13}$,  ,  $10^{-11}$, ,  $10^{-9}$,  ,  $10^{-7}$,  ,  $10^{-5}$, ,  $10^{-3}$,   , $10^{-1}$},
xminorticks=false,
xlabel={$\lambda$},
every outer y axis line/.append style={black},
every y tick label/.append style={font=\color{black}},
ymode=log,
ymin=1e-13,
ymax=0.01,
ytick={ 1e-12,  1e-11,  1e-10,  1e-09,  1e-08,  1e-07,  1e-06,  1e-05, 0.0001,  0.001,   0.01},
yticklabels={ $10^{-12}$,  ,  $10^{-10}$, ,  $10^{-8}$,  ,  $10^{-6}$,  ,  $10^{-4}$, ,  $10^{-2}$},
yminorticks=false,
ylabel={energy norm error},
axis background/.style={fill=white},
legend style={at={(0.5,0.97)},anchor=north,legend columns=4,legend cell align=left,align=left,draw=black}
]
\addplot [color=blue,dashed,mark=asterisk,mark options={solid}]
  table[row sep=crcr]{%
0.1	2.22043067303763e-05\\
0.01	4.35077014376994e-05\\
0.001	4.65778727128461e-05\\
0.0001	4.68969808908077e-05\\
1e-05	4.69290158899527e-05\\
1e-06	4.69322206357255e-05\\
1e-07	4.69325411281134e-05\\
1e-08	4.69325732300405e-05\\
1e-09	4.69325757124149e-05\\
1e-10	4.69325954196188e-05\\
1e-11	4.69325493984098e-05\\
1e-12	4.69439035146029e-05\\
1e-13	4.71552506077022e-05\\
};
\addlegendentry{$P_1$};

\addplot [color=blue,solid,mark=o,mark options={solid}]
  table[row sep=crcr]{%
0.1	3.645075541586e-07\\
0.01	1.56334723100479e-07\\
0.001	1.54116101418501e-07\\
0.0001	1.55004967995591e-07\\
1e-05	1.55106011068979e-07\\
1e-06	1.55116237519538e-07\\
1e-07	1.55117261307625e-07\\
1e-08	1.55117364464971e-07\\
1e-09	1.55117349532449e-07\\
1e-10	1.5511761028899e-07\\
1e-11	1.55121076375886e-07\\
1e-12	1.55188600677303e-07\\
1e-13	1.7405905665404e-07\\
};
\addlegendentry{$P_2$};

\addplot [color=blue,dotted,mark=+,mark options={solid}]
  table[row sep=crcr]{%
0.1	2.61496804545942e-10\\
0.01	4.98939055377181e-10\\
0.001	5.32425022351273e-10\\
0.0001	5.35896708454065e-10\\
1e-05	5.3624513868085e-10\\
1e-06	5.36279994377855e-10\\
1e-07	5.36283475480354e-10\\
1e-08	5.36283831181367e-10\\
1e-09	5.36284126699955e-10\\
1e-10	5.3628340527857e-10\\
1e-11	5.36307780863329e-10\\
1e-12	5.3877020390162e-10\\
1e-13	5.96359743196154e-10\\
};
\addlegendentry{$P_3$};

\addplot [color=blue,dashdotted,mark=square,mark options={solid}]
  table[row sep=crcr]{%
0.1	1.4725891828844e-12\\
0.01	1.66659044817231e-12\\
0.001	1.76228878538549e-12\\
0.0001	1.77358174347547e-12\\
1e-05	1.77472828936459e-12\\
1e-06	1.77484296667645e-12\\
1e-07	1.77485457545357e-12\\
1e-08	1.77485546591019e-12\\
1e-09	1.77485562918018e-12\\
1e-10	1.77485715101978e-12\\
1e-11	1.77489533945522e-12\\
1e-12	1.79491159899424e-12\\
1e-13	3.61815730468383e-12\\
};
\addlegendentry{$P_4$};

\end{axis}
\end{tikzpicture}%
%
%
%
\begin{tikzpicture}[scale = 0.9]

\begin{axis}[%
width=2.5in, 
height=2in,  
at={(0.809in,0.522in)},
scale only axis,
separate axis lines,
every outer x axis line/.append style={black},
every x tick label/.append style={font=\color{black}},
xmode=log,
xmin=1e-11,
xmax=10,
xtick={ 1e-10,  1e-09,  1e-08,  1e-07,  1e-06,  1e-05, 0.0001,  0.001,   0.01,    0.1, 1},
xticklabels={ $10^{-10}$,  ,  $10^{-8}$, ,  $10^{-6}$,  ,  $10^{-4}$,  ,  $10^{-2}$, ,  $1$},
xminorticks=false,
xlabel={$\alpha_0$},
every outer y axis line/.append style={black},
every y tick label/.append style={font=\color{black}},
ymode=log,
ymin=1e-13,
ymax=0.01,
yminorticks=false,
ylabel={energy norm error},
axis background/.style={fill=white},
legend style={at={(0.5,0.97)},anchor=north,legend columns=4,legend cell align=left,align=left,draw=black}
]
\addplot [color=blue,dashed,mark=asterisk,mark options={solid}]
  table[row sep=crcr]{%
1	4.5186568165753e-05\\
0.1	4.47796438601279e-05\\
0.01	4.4739243021703e-05\\
0.001	4.47352058383787e-05\\
0.0001	4.4734802146987e-05\\
1e-05	4.47347617655925e-05\\
1e-06	4.47347578772116e-05\\
1e-07	4.47347599279568e-05\\
1e-08	4.47347525408896e-05\\
1e-09	4.47351397868374e-05\\
1e-10	4.47666624323351e-05\\
};
\addlegendentry{$P_1$};

\addplot [color=blue,solid,mark=o,mark options={solid}]
  table[row sep=crcr]{%
1	1.52746537638214e-07\\
0.1	1.51090800711023e-07\\
0.01	1.50926843083497e-07\\
0.001	1.50910463417783e-07\\
0.0001	1.50908825235442e-07\\
1e-05	1.50908661973134e-07\\
1e-06	1.50908650054335e-07\\
1e-07	1.50908615688729e-07\\
1e-08	1.50909037583026e-07\\
1e-09	1.50907178717934e-07\\
1e-10	1.51843513803633e-07\\
};
\addlegendentry{$P_2$};

\addplot [color=blue,dotted,mark=+,mark options={solid}]
  table[row sep=crcr]{%
1	5.17269423841341e-10\\
0.1	5.10375333830083e-10\\
0.01	5.09691834249559e-10\\
0.001	5.09623543142384e-10\\
0.0001	5.09616714492954e-10\\
1e-05	5.09616028569591e-10\\
1e-06	5.09615964910052e-10\\
1e-07	5.09614017156339e-10\\
1e-08	5.09618588053653e-10\\
1e-09	5.09741056977722e-10\\
1e-10	5.15906559859399e-10\\
};
\addlegendentry{$P_3$};

\addplot [color=blue,dashdotted,mark=square,mark options={solid}]
  table[row sep=crcr]{%
1	1.71604482089942e-12\\
0.1	1.69197268758834e-12\\
0.01	1.68954898049942e-12\\
0.001	1.68934470686527e-12\\
0.0001	1.68932680259355e-12\\
1e-05	1.68933241819242e-12\\
1e-06	1.6892985842038e-12\\
1e-07	1.68930129224902e-12\\
1e-08	1.68934803004146e-12\\
1e-09	1.68865180395923e-12\\
1e-10	1.74268122114982e-12\\
};
\addlegendentry{$P_4$};

\end{axis}
\end{tikzpicture}%
			\caption{Energy norm error for $N = 1024$ and finite elements of order
			$k = 1, \ldots, 4$ applied to Example~\ref{test:SunStynes} with
			$\eps = 10^{-8}$, $\lambda = 10^{-13},\ldots,10^{-1}$, and $\alpha_0=1$ (left).
			Same setting with $\lambda = 0.005$ and $\alpha_0=10^{-10},\ldots,1$ (right).}
			\label{fig:lambda}
		\end{center}
	\end{figure}
	
	Finally, in Table~\ref{tab:l2norm} we compare the energy norm and the $L^2$-norm
	error for linear finite elements. As predicted by theory, cf.
	Theorem~\ref{th:L2ErrorLiseikin}, the $L^2$-error is uniformly convergent of
	second order whereas the error in the $\tnorm{\cdot}_\eps$-norm	converges
	with order one only.
	
	In summary, our numerical experiments confirm the theoretical results of
	Section~\ref{sec:FEM-analysis}.
	
	\begin{table}
	\begin{center}
	\begin{tabular}{ r *{8}{c} } \toprule
			& \multicolumn{4}{c}{$\eps=10^{-8}$} & \multicolumn{4}{c}{$\eps=10^{-12}$} \\
		\cmidrule(l{0.7ex}r{0.7ex}){2-5} \cmidrule(l{0.7ex}r{0.7ex}){6-9}
			& \multicolumn{2}{c}{$\tnorm{u-u_N}_\eps$} & \multicolumn{2}{c}{$\norm{u-u_N}$} &
				\multicolumn{2}{c}{$\tnorm{u-u_N}_\eps$} & \multicolumn{2}{c}{$\norm{u-u_N}$} \\
		$N$ & error & rates & error & rates & error & rates & error & rates \\ 
		\cmidrule(l{0.7ex}r{0.7ex}){1-1}
		\cmidrule(l{0.7ex}r{0.7ex}){2-3} \cmidrule(l{0.7ex}r{0.7ex}){4-5}
		\cmidrule(l{0.7ex}r{0.7ex}){6-7} \cmidrule(l{0.7ex}r{0.7ex}){8-9}
		   8 & 7.58e-03 & 1.402    & 4.11e-03 & 2.467    & 2.22e-03 & 1.623    & 2.06e-03 & 1.846    \\ 
		  16 & 2.87e-03 & 0.986    & 7.43e-04 & 2.108    & 7.22e-04 & 1.460    & 5.72e-04 & 1.853    \\ 
		  32 & 1.45e-03 & 1.002    & 1.72e-04 & 2.009    & 2.62e-04 & 1.203    & 1.59e-04 & 1.947    \\ 
		  64 & 7.23e-04 & 1.001    & 4.28e-05 & 2.002    & 1.14e-04 & 1.070    & 4.11e-05 & 1.986    \\ 
		 128 & 3.62e-04 & 1.000    & 1.07e-05 & 2.000    & 5.43e-05 & 1.019    & 1.04e-05 & 1.997    \\ 
		 256 & 1.81e-04 & 1.000    & 2.67e-06 & 2.000    & 2.68e-05 & 1.005    & 2.60e-06 & 1.999    \\ 
		 512 & 9.04e-05 & 1.000    & 6.68e-07 & 2.000    & 1.33e-05 & 1.001    & 6.50e-07 & 2.000    \\ 
		1024 & 4.52e-05 & 1.000    & 1.67e-07 & 2.000    & 6.66e-06 & 1.000    & 1.63e-07 & 2.000    \\ 
		2048 & 2.26e-05 & 1.000    & 4.17e-08 & 2.000    & 3.33e-06 & 1.000    & 4.07e-08 & 2.000    \\ 
		\midrule
		theory & & 1 & & 2 & & 1 & & 2 \\
		\bottomrule
	\end{tabular}
	\caption{Energy norm  and $L^2$-norm error for linear finite elements applied
	to Example~\ref{test:SunStynes} with $\eps = 10^{-8}, 10^{-12}$ and $\lambda = 0.005$.}
	\label{tab:l2norm}
	\end{center}
	\end{table}
	
	\section*{Acknowledgement}
	The author would like to thank Hans-G{\"o}rg Roos for helpful comments and discussions.
	
  \begin{appendix}
	
	\section{Auxiliary lemmas}
	
	In this section we provide some auxiliary lemmas and prove some basic inequalities
	that are needed in the paper.
	\begin{lemma}
		\label{le:sumToSum}
		Let $\alpha \in \ZBR, \, 0< \alpha \leq 1$ and let $a, b > 0$. Then
		\begin{gather*}
			\begin{aligned}
				\left(a+b\right)^{1/\alpha} &\leq 2^{1/\alpha-1} \left(a^{1/\alpha} + b^{1/\alpha}\right)\!, \\
				\left(a+b\right)^\alpha &\leq a^\alpha + b^\alpha, 
			\end{aligned} \qquad \text{for } 0 < \alpha \leq 1.
		\end{gather*}
	\end{lemma}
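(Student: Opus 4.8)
The plan is to prove the two inequalities of Lemma~\ref{le:sumToSum} separately, both by reducing to a single-variable convexity/concavity argument.

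For the second inequality, $(a+b)^\alpha \leq a^\alpha + b^\alpha$ for $0 < \alpha \leq 1$, I would set $t = b/(a+b) \in (0,1)$ and divide through by $(a+b)^\alpha$; the claim becomes $1 \leq (1-t)^\alpha + t^\alpha$. Since $0 < \alpha \leq 1$ we have $s^\alpha \geq s$ for all $s \in [0,1]$ (because $x \mapsto x^\alpha$ is concave with $0^\alpha = 0$, $1^\alpha = 1$, so it lies above the chord), hence $(1-t)^\alpha + t^\alpha \geq (1-t) + t = 1$. Alternatively one can argue directly: fix $a$ and consider $g(b) = a^\alpha + b^\alpha - (a+b)^\alpha$; then $g(0) = 0$ and $g'(b) = \alpha b^{\alpha-1} - \alpha(a+b)^{\alpha-1} \geq 0$ since $\alpha - 1 \leq 0$ makes $s \mapsto s^{\alpha-1}$ nonincreasing and $b \leq a+b$. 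Either route is a couple of lines.

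For the first inequality, $(a+b)^{1/\alpha} \leq 2^{1/\alpha - 1}(a^{1/\alpha} + b^{1/\alpha})$, I would invoke convexity of $x \mapsto x^{1/\alpha}$ on $[0,\infty)$, valid because $1/\alpha \geq 1$. Applying Jensen's inequality (or just the definition of convexity) to the midpoint,
\begin{gather*}
	\left(\frac{a+b}{2}\right)^{1/\alpha} \leq \frac{1}{2}\left(a^{1/\alpha} + b^{1/\alpha}\right),
\end{gather*}
and multiplying both sides by $2^{1/\alpha}$ gives exactly the stated bound.

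There is essentially no obstacle here; the only thing to watch is the boundary case $\alpha = 1$, where both inequalities become equalities ($2^{1/\alpha-1} = 1$ and the second reads $a+b \leq a+b$), so the statements remain valid, and the degenerate cases where $a$ or $b$ is $0$, which are immediate. I would present the convexity/concavity facts as the one substantive ingredient and keep the rest as a short direct verification.
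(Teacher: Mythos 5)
Your proposal is correct and follows essentially the same route as the paper: the first inequality via convexity of $x \mapsto x^{1/\alpha}$ applied at the midpoint, and the second via a one-variable monotonicity argument (your $g(b) = a^\alpha + b^\alpha - (a+b)^\alpha$ with $g(0)=0$, $g' \geq 0$ is just the negative of the function the paper differentiates); your normalization variant with $t = b/(a+b)$ is an equally valid minor rephrasing.
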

	
	\begin{proof}
		To prove the first inequality, we use that $x \mapsto x^{1/\alpha}$ is convex for $0<\alpha \leq 1$. Hence,
		\begin{gather*}
			\left(a+b\right)^{1/\alpha} = 2^{1/\alpha} \left(\frac{1}{2} a + \frac{1}{2} b\right)^{\!1/\alpha}
				\leq 2^{1/\alpha} \left(\frac{1}{2} a^{1/\alpha} + \frac{1}{2} b^{1/\alpha} \right)
				= 2^{1/\alpha -1} \left(a^{1/\alpha} + b^{1/\alpha} \right)\!.
		\end{gather*}
		We gain the second inequality by studying the function $f: (0,\infty) \to \ZBR$ which is defined by
		$f(x) = (a+x)^\alpha - (a^\alpha + x^\alpha)$. Since $a>0$, $0<\alpha \leq 1$, and $x \mapsto x^{\alpha-1}$
		is monotonically decreasing we have
		\begin{gather*}
			f'(x) = \alpha \big((a+x)^{\alpha-1}-x^{\alpha-1}\big) \leq 0 \qquad \text{for } x > 0.
		\end{gather*}
		Hence, also $f$ is monotonically decreasing and for $x \in (0,b)$
		\begin{gather*}
			(a+b)^\alpha - (a^\alpha +b^\alpha) \leq f(x) \leq \lim_{x\to 0}f(x) = (a+0)^\alpha-(a^\alpha+0^\alpha) = a^\alpha - a^\alpha = 0.
		\end{gather*}
	\end{proof}
	
	\begin{lemma}
		\label{le:1pcmc}
		Let $\alpha, c \in [0,1]$. Then $2^\alpha -1 \leq (1+c)^\alpha - c^\alpha \leq 1$.
	\end{lemma}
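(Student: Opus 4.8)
The plan is to fix $\alpha \in [0,1]$ and study the single-variable function $g(c) := (1+c)^\alpha - c^\alpha$ on $[0,1]$; both bounds drop out once we know that $g$ is non-increasing there, combined with the endpoint values $g(0) = 1$ and $g(1) = 2^\alpha - 1$.

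First I would dispose of the two degenerate exponents. If $\alpha = 1$, then $g(c) \equiv 1$ and the chain $2^1 - 1 \le 1 \le 1$ is trivial; if $\alpha = 0$, then $g(c) = 0$ for $c \in (0,1]$ (and $g(0)=0$ under the convention $0^0=1$), so $2^0 - 1 \le 0 \le 1$ holds as well. Hence assume $0 < \alpha < 1$. For such $\alpha$ and $c \in (0,1]$ one computes $g'(c) = \alpha\bigl((1+c)^{\alpha-1} - c^{\alpha-1}\bigr)$. Since $\alpha - 1 < 0$, the map $t \mapsto t^{\alpha-1}$ is strictly decreasing on $(0,\infty)$, and $1+c > c > 0$ gives $(1+c)^{\alpha-1} < c^{\alpha-1}$, so $g'(c) < 0$. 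Thus $g$ is strictly decreasing on $(0,1]$, and being continuous on the closed interval $[0,1]$ it is non-increasing there, whence $g(1) \le g(c) \le g(0)$ for all $c \in [0,1]$. Evaluating $g(0) = 1^\alpha - 0^\alpha = 1$ and $g(1) = 2^\alpha - 1^\alpha = 2^\alpha - 1$ yields precisely $2^\alpha - 1 \le (1+c)^\alpha - c^\alpha \le 1$.

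The only mild subtlety is the blow-up of $g'$ at $c = 0$ when $\alpha < 1$, but since we use monotonicity only on the open interval and then pass to the endpoint by continuity, this is harmless, so I do not expect a real obstacle. As an aside, the upper bound alone follows even more directly from the second inequality of Lemma~\ref{le:sumToSum} applied with $a = 1$ and $b = c$, namely $(1+c)^\alpha \le 1^\alpha + c^\alpha$; only the lower bound genuinely needs the monotonicity argument above.
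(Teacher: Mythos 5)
Your proposal is correct and follows essentially the same route as the paper: differentiate $c \mapsto (1+c)^\alpha - c^\alpha$, observe the derivative $\alpha\bigl((1+c)^{\alpha-1}-c^{\alpha-1}\bigr)$ is nonpositive because $\alpha-1\leq 0$, conclude monotone decrease, and read off the bounds at the endpoints $c=0$ and $c=1$. The only cosmetic difference is that the paper bounds $(1+x)^{\alpha-1}\leq 1$ and $x^{\alpha-1}\geq 1$ separately while you invoke monotonicity of $t\mapsto t^{\alpha-1}$; both are fine.
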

	\begin{proof}
		Since the case $\alpha = 0$ is easy, we assume $\alpha > 0$.
		The proof uses the monotonicity properties of the function $x \mapsto (1+x)^\alpha - x^\alpha$.
		To detect these properties we study the first derivative
		\begin{gather*}
			\frac{\partial}{\partial x} \left[(1+x)^\alpha - x^\alpha\right]
				= \alpha \left[(1+x)^{\alpha-1} - x^{\alpha-1}\right]
		\end{gather*}
		for $x \in (0,1)$. By assumption $\alpha - 1 \leq 0$. Consequently, $(1+x)^{\alpha-1}\leq 1$
		and $x^{\alpha -1} \geq 1$ for $x \in (0,1)$. Therefore, the first derivative of
		$x \mapsto (1+x)^\alpha - x^\alpha$ is negative in $(0,1)$ and the function is monotonically
		decreasing in this interval. Hence,
		\begin{gather*}
			2^\alpha-1 = (1+1)^\alpha - 1^\alpha \leq (1+c)^\alpha - c^\alpha \leq (1+0)^\alpha - 0^\alpha = 1.
		\end{gather*}
	\end{proof}
	
	\begin{lemma}
		\label{le:1pcmcII}
		Let $\alpha, c \in (0,1]$. Then
		\begin{gather*}
			0 < (1+c)^\alpha - c^\alpha
			  \leq \frac{\big(2^\alpha - 1\big)}{\ln(2)} \big(\ln (1+ c) - \ln (c)\big).
		\end{gather*}
		Furthermore, we have
		\begin{gather*}
			\alpha \ln(2) \leq \big(2^\alpha -1\big) \leq \alpha.
		 \end{gather*}
	\end{lemma}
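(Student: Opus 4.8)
The strict positivity $0 < (1+c)^\alpha - c^\alpha$ is immediate from $c < 1+c$ together with the monotonicity of $x \mapsto x^\alpha$, so the substance of the statement lies in the upper bound. The plan is to express the quotient
\begin{gather*}
	\frac{(1+c)^\alpha - c^\alpha}{\alpha\bigl(\ln(1+c) - \ln(c)\bigr)}
\end{gather*}
as an integral and then estimate the integrand pointwise. The key tool will be the elementary identity, valid for positive reals $p \neq q$,
\begin{gather*}
	\frac{p - q}{\ln(p) - \ln(q)} = \int_0^1 p^t q^{1-t}\, dt,
\end{gather*}
obtained by integrating $t \mapsto (p/q)^t$ and using $q\,(p/q)^t = p^t q^{1-t}$. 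I would apply this with $p = (1+c)^\alpha$ and $q = c^\alpha$ (which are distinct since $c \neq 1+c$), using $\ln(p) - \ln(q) = \alpha\bigl(\ln(1+c) - \ln(c)\bigr)$, to obtain
\begin{gather*}
	\frac{(1+c)^\alpha - c^\alpha}{\alpha\bigl(\ln(1+c) - \ln(c)\bigr)} = \int_0^1 (1+c)^{\alpha t}\, c^{\alpha(1-t)}\, dt.
\end{gather*}

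Next I would bound the integrand: for $c \in (0,1]$ and $\alpha > 0$ one has $c^{\alpha(1-t)} \leq 1$ and $(1+c)^{\alpha t} \leq 2^{\alpha t}$ for every $t \in [0,1]$, so the integrand is at most $2^{\alpha t}$ and hence
\begin{gather*}
	\int_0^1 (1+c)^{\alpha t}\, c^{\alpha(1-t)}\, dt \leq \int_0^1 2^{\alpha t}\, dt = \frac{2^\alpha - 1}{\alpha \ln(2)}.
\end{gather*}
Multiplying through by $\alpha\bigl(\ln(1+c) - \ln(c)\bigr) > 0$ then yields the claimed inequality. I expect the main obstacle to be spotting the correct reduction rather than any computation: a direct attempt to compare $\alpha x^{\alpha-1}$ with $\tfrac{2^\alpha-1}{\ln(2)}\,x^{-1}$ pointwise on $x \in [c,1+c]$ fails near $x = 2$ (there $\alpha\,2^\alpha > \tfrac{2^\alpha-1}{\ln 2}$), so the comparison has to be performed only after the change of viewpoint that turns the quotient into $\int_0^1 p^t q^{1-t}\,dt$ — equivalently, after passing to the logarithmic mean of $(1+c)^\alpha$ and $c^\alpha$, which is monotone in each argument.

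For the two-sided estimate on $2^\alpha - 1$ I would write $2^\alpha = e^{\alpha \ln(2)}$: the inequality $e^x \geq 1 + x$ applied with $x = \alpha \ln(2)$ gives $2^\alpha - 1 \geq \alpha \ln(2)$, while convexity of $t \mapsto 2^t$ on $[0,1]$ gives $2^\alpha \leq (1-\alpha)\,2^0 + \alpha\,2^1 = 1 + \alpha$, that is $2^\alpha - 1 \leq \alpha$, for $\alpha \in [0,1]$. These two short estimates finish the lemma.
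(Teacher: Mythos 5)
Your proof is correct, and for the main inequality it takes a genuinely different route from the paper. The paper works with the reciprocal ratio $f(x)=\frac{\alpha(\ln(1+x)-\ln x)}{(1+x)^\alpha-x^\alpha}$, computes $f'$ explicitly, and shows $f'\leq 0$ by reducing the critical expression to a perfect square $\bigl(x^{(1+\alpha)/2}-x^{(1-\alpha)/2}(1+x)^\alpha\bigr)^2\geq 0$ (using the subadditivity $(a+b)^\alpha\leq a^\alpha+b^\alpha$ from its Lemma~A.1 along the way); monotonicity then gives $f(c)\geq f(1)$, which is the claimed bound. You instead recognise the quotient as the logarithmic mean of $p=(1+c)^\alpha$ and $q=c^\alpha$, invoke the integral representation $\frac{p-q}{\ln p-\ln q}=\int_0^1 p^t q^{1-t}\,dt$, and bound the integrand by $2^{\alpha t}\cdot 1$ using $c\leq 1$ and $1+c\leq 2$; integrating gives exactly $\frac{2^\alpha-1}{\alpha\ln 2}$, so the constant is recovered with equality at $c=1$, just as in the paper. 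Your route is shorter and more conceptual — the monotonicity of the logarithmic mean in each argument does all the work, and no derivative computation or algebraic miracle is needed — at the cost of pulling in the integral identity, which the paper avoids by keeping everything at the level of elementary differentiation. Your observation that the naive pointwise comparison of $\alpha x^{\alpha-1}$ against $\frac{2^\alpha-1}{\ln 2}x^{-1}$ fails near $x=2$ is accurate and explains why some such change of viewpoint is forced. The proof of $\alpha\ln 2\leq 2^\alpha-1\leq\alpha$ is identical to the paper's ($e^x\geq 1+x$ and convexity of $t\mapsto 2^t$).
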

	\begin{proof}
		We study the function
		\begin{gather*}
			f(x) = \frac{\alpha \ln (1+x) - \alpha \ln(x)}{(1+x)^\alpha - x^\alpha}
				= \frac{\ln\! \left(\left(\frac{1+x}{x}\right)^\alpha\right)}{(1+x)^\alpha - x^\alpha}
		\end{gather*}
		for $x \in (0,1], \, \alpha \in (0,1]$. The first derivative of $f$ is calculated to be
		\begin{align*}
			f'(x)
				& = \frac{\alpha\left(\frac{1}{1+x}-\frac{1}{x}\right)\big((1+x)^\alpha-x^\alpha\big)
					- \alpha \ln\! \left(\left(\frac{1+x}{x}\right)^\alpha\right)\big((1+x)^{\alpha-1}-x^{\alpha-1}\big)
					}{\big((1+x)^\alpha-x^\alpha\big)^2} \\
				& = \frac{\alpha\left(x-(1+x)\right)\big((1+x)^\alpha-x^\alpha\big)
					- \alpha \ln\! \left(\left(\frac{1+x}{x}\right)^\alpha\right)\big((1+x)^{\alpha-1}-x^{\alpha-1}\big)x(1+x)
					}{x(1+x)\big((1+x)^\alpha-x^\alpha\big)^2} \\
				& = \frac{-\alpha}{x(1+x)\big((1+x)^\alpha-x^\alpha\big)^2}
					\underbrace{\Big[\big((1+x)^\alpha-x^\alpha\big)
					+ \ln\! \left(\left(\tfrac{1+x}{x}\right)^\alpha\right)\big(x(1+x)^\alpha-x^\alpha (1+x)\big)\Big]}_{=:g(x)}.
		\end{align*}
		By Lemma~\ref{le:sumToSum} we have for $x \in (0,1], \, \alpha \in (0,1]$
		\begin{gather*}
			x(1+x)^\alpha-x^\alpha(1+x)
				\leq x(1^\alpha + x^\alpha)-x^\alpha-x^{1+\alpha}
				= x - x^\alpha
				\leq 0.
		\end{gather*}
		Therefore, due to $\ln(x) \leq x-1$ we obtain
		\begin{align*}
			g(x)
				& \geq \big((1+x)^\alpha-x^\alpha\big)
					+ \big(\left(\tfrac{1+x}{x}\right)^\alpha-1\big)\big(x(1+x)^\alpha-x^\alpha (1+x)\big) \\
				& = \big((1+x)-1\big)x^\alpha +\big(1-x-(1+x)\big)(1+x)^\alpha+ x^{1-\alpha}(1+x)^{2\alpha} \\
				& = x^{1+\alpha} - 2x(1+x)^\alpha + x^{1-\alpha}(1+x)^{2\alpha} \\
				& = \left(x^{(1+\alpha)/2} - x^{(1-\alpha)/2}(1+x)^\alpha\right)^2 \\
				& \geq 0.
		\end{align*}
		Thus, $f'(x) \leq 0$ for $x\in (0,1], \, \alpha \in (0,1]$ and $f$ is monotonically decreasing. This yields
		\begin{gather*}
			f(x)= \frac{\alpha \ln (1+x) - \alpha \ln(x)}{(1+x)^\alpha - x^\alpha}
				\geq \frac{\alpha \ln (2) - \alpha \ln(1)}{2^\alpha - 1^\alpha}
				= \frac{\alpha \ln(2)}{2^\alpha-1}
		\end{gather*}
		and for $x = c \in (0,1]$
		\begin{gather*}
			(1+c)^\alpha - c^\alpha \leq \frac{\big(2^\alpha-1\big)}{\ln(2)} \big(\ln(1+c)-\ln(c)\big).
		\end{gather*}
		The lower bound of the second statement follows from $1+x \leq e^x$ since
		\begin{gather*}
			1+ \alpha \ln(2) \leq e^{\alpha \ln(2)} = 2^\alpha.
		\end{gather*}
		The upper bound is verified using the convexity of $x \mapsto 2^x$. We have
		\begin{gather*}
			2^\alpha = 2^{\alpha \cdot 1 +(1-\alpha) \cdot 0} \leq \alpha \cdot 2^1 + (1-\alpha) \cdot 2^0 = 2\alpha + (1-\alpha) = 1+\alpha.
		\end{gather*}
	\end{proof}
	\begin{remark}
		The bound of the last lemma is exact for $c = 1$ and asymptotically exact for $\alpha \searrow 0$.
	\end{remark}
	
	\newpage
	\section{Proof of Lemmas~\ref{le:liseikinhBounds} and~\ref{le:liseikinh^kBounds}}
	\label{app:proofLiseikin}
	
	For $\xi \geq 0$ we have
	\begin{subequations}
	\label{eq:phiDer}
	\begin{align}
		\phj(\xi,\eps)
			& =\left(\eps^{\alpha/2} + \xi \left[(1+\eps^{1/2})^\alpha-\eps^{\alpha/2}\right]\right)^{1/\alpha} - \eps^{1/2}, \\
		\frac{\partial}{\partial \xi} \phj(\xi, \eps)
			& = \frac{1}{\alpha} \left[(1+\eps^{1/2})^\alpha-\eps^{\alpha/2}\right]
				\left(\eps^{\alpha/2} + \xi \left[(1+\eps^{1/2})^\alpha-\eps^{\alpha/2}\right]\right)^{(1-\alpha)/\alpha}, \\
		\frac{\partial^2}{\partial \xi^2} \phj(\xi, \eps)
			& = \frac{1-\alpha}{\alpha^2} \left[(1+\eps^{1/2})^\alpha-\eps^{\alpha/2}\right]^2
				\left(\eps^{\alpha/2} + \xi \left[(1+\eps^{1/2})^\alpha-\eps^{\alpha/2}\right]\right)^{(1-2\alpha)/\alpha}.
	\end{align}
	\end{subequations}
	Using the definition of the mesh points $x_i = \phj(ih,\eps)$ we obtain
	\begin{gather}
		\label{eq:xiPsqrteps}
		x_i + \eps^{1/2}
			 = \left(\eps^{\alpha/2} + i h \left[(1+\eps^{1/2})^\alpha-\eps^{\alpha/2}\right]\right)^{1/\alpha}.
	\end{gather}
	
	\begin{proof}[of Lemma~\ref{le:liseikinh^kBounds}]
		Recalling~\eqref{eq:phiDer} and~\eqref{eq:xiPsqrteps}, using the mean value theorem,
		and the monotony of $\frac{\partial}{\partial \xi} \phj(\xi, \eps)$
		(here $\alpha \leq 1$ is needed) we can bound the lengths of the mesh intervals as follows
		\begin{gather}
		\label{eq:hibound}
		\begin{aligned}
			h_i & = x_i - x_{i-1} = \phj\!\left(ih,\eps\right) - \phj\!\left((i-1)h,\eps\right) \\
				& \leq h \, \frac{\partial \phj}{\partial \xi}\!\left(ih,\eps\right) \\
				& = h \frac{1}{\alpha} \left[(1+\eps^{1/2})^\alpha-\eps^{\alpha/2}\right]
					\left(\eps^{\alpha/2} + i h \left[(1+\eps^{1/2})^\alpha-\eps^{\alpha/2}\right]\right)^{(1-\alpha)/\alpha} \\
				& = h \frac{1}{\alpha} \left[(1+\eps^{1/2})^\alpha-\eps^{\alpha/2}\right]
					\left(x_i + \eps^{1/2}\right)^{(1-\alpha)}.
		\end{aligned}
		\end{gather}
		Consequently, for $0<\alpha \leq 1$ we get the third wanted estimate
		\begin{gather*}
			h_i \leq h \frac{1}{\alpha} \left[(1+\eps^{1/2})^\alpha-\eps^{\alpha/2}\right]
					\left(x_i + \eps^{1/2}\right)^{(1-\alpha)}
				\leq h \kappa 2^{(1-\alpha)}
				\leq 2 \kappa h \leq C h.
		\end{gather*}
		
		Now, let $0 < \alpha \leq 1/k$ with $k \in \ZBN,\, k\geq 1$ and $\eps \leq h^{2/\alpha}$. Using the
		definition of the mesh~\eqref{eq:liseikinMeshGenerator} and Lemma~\ref{le:1pcmc} we obtain
		\begin{align*}
			x_1 & = \left(\eps^{\alpha/2} + h \left[(1+\eps^{1/2})^\alpha-\eps^{\alpha/2}\right]\right)^{1/\alpha} - \eps^{1/2} \\
				& \leq \left(h + h \left[(1+\eps^{1/2})^\alpha-\eps^{\alpha/2}\right]\right)^{1/\alpha} \\
				& \leq \left(2 h\right)^{1/\alpha}
					= \left(2 h^{(1-k\alpha)}\right)^{1/\alpha} h^k \\
				& \leq 2^{1/\alpha} h^k.
		\end{align*}
		Note that for $0<\alpha\leq 1/(2k)$ and $h\leq 1/4$ an $\alpha$-independent estimate
		is guaranteed because of
		\begin{gather*}
			2 h^{(1-k\alpha)} \leq 2 h^{1/2} \leq 2 \left(\tfrac{1}{4}\right)^{1/2} = 1.
		\end{gather*}
		
		Finally, let $\hat{\alpha} > 0$ and $0< \alpha \leq \min\{\hat{\alpha}/k,1\}$ with
		$k \in \ZBN,\, k\geq 1$.
		If $\eps \geq h^{2/\alpha}$ we obtain by~\eqref{eq:hibound} and Lemma~\ref{le:1pcmc}
		\begin{align*}
			h_1 \left( x_{0}+\eps^{1/2} \right)^{\hat{\alpha}/k-1}
				& = h_1 \eps^{(\hat{\alpha}/k-1)/2} \\
				& \leq h \frac{1}{\alpha} \left[(1+\eps^{1/2})^\alpha-\eps^{\alpha/2}\right]
					\left(\eps^{\alpha/2} + h \left[(1+\eps^{1/2})^\alpha-\eps^{\alpha/2}\right]\right)^{(1-\alpha)/\alpha}
					\eps^{(\hat{\alpha}/k-1)/2} \\
				& \leq h \kappa \left(2\eps^{\alpha/2}\right)^{(1-\alpha)/\alpha}
					\eps^{(\hat{\alpha}/k-1)/2} \\
				& = h \kappa 2^{(1-\alpha)/\alpha} \eps^{(\hat{\alpha}/k-\alpha)/2} \\
				& \leq C h.
		\end{align*}
		In general, the estimate~\eqref{eq:hibound} and the identity~\eqref{eq:xiPsqrteps} yield
		for $1\leq i \leq N$
		\begin{align*}
			& h_i \left( x_{i-1}+\eps^{1/2} \right)^{\hat{\alpha}/k-1}\! \\
			& \quad \leq h \frac{1}{\alpha} \left[(1+\eps^{1/2})^\alpha-\eps^{\alpha/2}\right]
				\left(x_i + \eps^{1/2}\right)^{(1-\alpha)} \left( x_{i-1}+\eps^{1/2} \right)^{\hat{\alpha}/k-1} \\
			& \quad = h \frac{1}{\alpha} \left[(1+\eps^{1/2})^\alpha-\eps^{\alpha/2}\right]
				\left(\frac{x_i + \eps^{1/2}}{x_{i-1}+\eps^{1/2}}\right)^{\!(1-\alpha)}
				\left( x_{i-1}+\eps^{1/2} \right)^{\hat{\alpha}/k-\alpha} \\
			& \quad = h \frac{1}{\alpha} \left[(1+\eps^{1/2})^\alpha-\eps^{\alpha/2}\right]
				\left(\frac{\eps^{\alpha/2} + i h \left[(1+\eps^{1/2})^\alpha-\eps^{\alpha/2}\right]}{
					\eps^{\alpha/2} + (i-1) h \left[(1+\eps^{1/2})^\alpha-\eps^{\alpha/2}\right]}\right)^{\!(1-\alpha)/\alpha}
				\!\!\left( x_{i-1}+\eps^{1/2} \right)^{\hat{\alpha}/k-\alpha}\!.
		\end{align*}
		Furthermore, for $2\leq i\leq N$ we have
		\begin{align*}
			\frac{\eps^{\alpha/2} + i h \left[(1+\eps^{1/2})^\alpha-\eps^{\alpha/2}\right]}{
					\eps^{\alpha/2} + (i-1) h \left[(1+\eps^{1/2})^\alpha-\eps^{\alpha/2}\right]}
				& = 1 + \frac{h \left[(1+\eps^{1/2})^\alpha-\eps^{\alpha/2}\right]}{
					\eps^{\alpha/2} + (i-1) h \left[(1+\eps^{1/2})^\alpha-\eps^{\alpha/2}\right]} \\
				& \leq 1 + \frac{h \left[(1+\eps^{1/2})^\alpha-\eps^{\alpha/2}\right]}{
					(i-1) h \left[(1+\eps^{1/2})^\alpha-\eps^{\alpha/2}\right]} \\
				& = 1 + \frac{1}{i-1} = \frac{i}{i-1}.
		\end{align*}
		Hence, 
		we get
		\begin{align*}
			h_i \left( x_{i-1}+\eps^{1/2} \right)^{\hat{\alpha}/k-1}\!
				& \leq h \kappa \left(\frac{i}{i-1}\right)^{\!(1-\alpha)/\alpha}
					\!\!\left( x_{i-1}+\eps^{1/2} \right)^{\hat{\alpha}/k-\alpha} \\
				& \leq C h	\qquad \qquad \text{ for } 2 \leq i \leq N.
		\end{align*}
		Raising the inequalities to the $k$-th power gives the wanted estimate.
	\end{proof}
	
	\begin{proof}[of Lemma~\ref{le:liseikinhBounds}]
		First of all, recall~\eqref{eq:phiDer}. For $i = 2,\ldots,N$ the mean value
		theorem, the monotony of $\frac{\partial}{\partial \xi} \phj(\xi, \eps)$
		(here $0< \alpha \leq \tfrac{1}{2}$ is needed), \eqref{eq:xiPsqrteps},
		and~\eqref{kappa} enable to bound the difference $h_i - h_{i-1}$ as follows
		\begin{align*}
			h_i-h_{i-1}
				& = \Big( \phj\big(ih,\eps\big) - \phj\big((i-1)h,\eps\big)\Big)
					- \Big( \phj\big((i-1)h,\eps\big) - \phj\big((i-2)h,\eps\big) \Big) \\
				& = h \left(\frac{\partial \phj}{\partial \xi}\!\left(\xi_i ,\eps\right)
					-\frac{\partial \phj}{\partial \xi}\!\left(\xi_{i-2} ,\eps\right) \right) \\
				& = h \left(\xi_i - \xi_{i-2}\right) \frac{\partial^2 \phj}{\partial \xi^2}\!\left(\xi_{i-1},\eps\right) \\
				& \leq 2 h^2 \frac{\partial^2 \phj}{\partial \xi^2}\!\left(ih,\eps\right) \\
				& = 2 h^2 \frac{1-\alpha}{\alpha^2} \left[(1+\eps^{1/2})^\alpha-\eps^{\alpha/2}\right]^2
					\left(\eps^{\alpha/2} + ih \left[(1+\eps^{1/2})^\alpha-\eps^{\alpha/2}\right]\right)^{(1-2\alpha)/\alpha} \\
				& = 2h^2 (1-\alpha)\kappa^2 \left( x_{i} + \eps^{1/2} \right)^{1- 2\alpha}
					\leq C h^2 \left( x_{i} + \eps^{1/2} \right)^{1- 2\alpha}
		\end{align*}
		where $\xi_{i} \in \left((i-1)h,ih\right)$,
		$\xi_{i-2} \in \left((i-2)h,(i-1) h\right)$, and
		$\xi_{i-1} \in \left(\xi_{i-2},\xi_{i}\right)$.
		
		Using this estimate and applying the same technique as in the proof
		of Lemma~\ref{le:liseikinh^kBounds}, we obtain
		\begin{align*}
			\left( h_i-h_{i-1} \right) \left( x_{i-1} + \eps^{1/2} \right)^{\hat{\alpha} - 1}
				& \leq C h^2 \left(\frac{i}{i-1}\right)^{\!(1-2\alpha)/\alpha}
					\!\!\left( x_{i-1}+\eps^{1/2} \right)^{\hat{\alpha}-2\alpha}
				\leq C h^2
		\end{align*}
		for $2 \leq i \leq N$.
	\end{proof}
	
	\section{Proof of Lemma~\ref{le:normEquiVN}}
	\label{app:proofNorm}
	
	\begin{proof}[of Lemma~\ref{le:normEquiVN}]
		An easy calculation shows
		\begin{align*}
			\norm{e}_{(x_L,x_R)}
				& = \left( \sum_{i=L+1}^R \int_{x_{i-1}}^{x_i} \left[ e_i- \frac{x_i-x}{h_i} (e_i-e_{i-1})\right]^2 dx \right)^{\!1/2} \\
				& = \left( \sum_{i=L+1}^R \int_{x_{i-1}}^{x_i} e_i^2
					- 2 \frac{x_i-x}{h_i} (e_i-e_{i-1})e_i + \frac{(x_i-x)^2}{h_i^2} (e_i-e_{i-1})^2 dx \right)^{\!1/2} \\
				& = \left( \sum_{i=L+1}^R \left( e_i^2 h_i - \frac{(x_i-x_{i-1})^2}{h_i} (e_i-e_{i-1})e_i
					+ \frac{1}{3}\frac{(x_i-x_{i-1})^3}{h_i^2} (e_i-e_{i-1})^2 \right) \right)^{\!1/2} \\
				& = \left(\sum_{i=L+1}^R h_i \left( e_i^2 - e_i^2 + e_i e_{i-1} 
					+ \frac{1}{3} e_i^2 - \frac{2}{3} e_i e_{i-1} + \frac{1}{3} e_{i-1}^2 \right) \right)^{\!1/2} \\
				& = \left(\frac{1}{3} \sum_{i=L+1}^R h_i \left( e_i^2 + e_i e_{i-1} + e_{i-1}^2 \right) \right)^{\!1/2}
					\geq \left(\frac{1}{6} \sum_{i=L+1}^R h_i \left( e_i^2 + e_{i-1}^2 \right) \right)^{\!1/2}.
		\end{align*}
		Furthermore, using the Cauchy-Schwarz inequality, we get
		\begin{align*}
			& \sum_{i=L+1}^{R-1} \hbar_i |e_i| + \frac{1}{2} \left(h_{L+1} |e_L| + h_{R}|e_R| \right) \\
			& \quad = \frac{1}{2} \sum_{i=L+1}^{R-1} \left( h_i |e_i| + h_{i+1} |e_i| \right)
					+ \frac{1}{2} \left(h_{L+1} |e_L| + h_{R}|e_R| \right)
				= \frac{1}{2} \sum_{i=L+1}^{R} h_i \left( |e_i| + |e_{i-1}| \right) \\
			& \quad \leq \left(\frac{1}{2} \sum_{i=L+1}^{R} h_i \right)^{\!1/2}
					\left(\frac{1}{2} \sum_{i=L+1}^{R} h_i \left( |e_i| + |e_{i-1}| \right)^2 \right)^{\!1/2}
			\leq \left( \frac{x_R-x_{L}}{2}\right)^{\!1/2}
					\!\left( \sum_{i=L+1}^{R} h_i \left( e_i^2 + e_{i-1}^2 \right) \right)^{\!1/2}.
		\end{align*}
		Hence, we have
		\begin{gather*}
			\sum_{i=L+1}^{R-1} \hbar_i |e_i| + \frac{1}{2} \left(h_{L+1} |e_L| + h_{R}|e_R| \right)
				\leq \sqrt{3} \left( x_R-x_{L} \right)^{1/2} \norm{e}_{(x_L,x_R)}\!.
		\end{gather*}
	\end{proof}	
	
  \end{appendix}

	\bibliographystyle{plain}
	\bibliography{turningPointBiblio}
	
\end{document}